\documentclass[reqno]{amsart}
\usepackage{amsmath, amssymb, amsthm, comment, graphics, graphicx, hyperref, url, longtable, stmaryrd}
\newcommand{\abs}[1]{\lvert#1\rvert}
\newcommand{\gfextn}[0]{jpeg}

\newtheorem{theorem}{Theorem}[section]
\newtheorem{lemma}[theorem]{Lemma}

\newtheorem*{remark}{Remark}
\numberwithin{equation}{section}
\newcommand\TS{\rule{0pt}{2.6ex}} % Top strut (for correcting vertical spacing in certain table rows)
\newcommand\BS{\rule[-1.2ex]{0pt}{0pt}} % Bottom strut (for correcting vertical spacing in certain table rows)

\begin{document}
\title{Explicit zero-free regions for the Riemann zeta-function}

\author[M.~J. Mossinghoff]{Michael J. Mossinghoff}
\address{Center for Communications Research\\
Princeton, NJ, USA}
\email{m.mossinghoff@idaccr.org}

\author[T.~S. Trudgian]{Timothy S. Trudgian}
\address{School of Science\\ The University of New South Wales Canberra, Australia}
\thanks{The second author was partially supported by Australian Research Council Future Fellowship FT160100094.}
\email{t.trudgian@adfa.edu.au}

\author[A. Yang]{Andrew Yang}
\address{School of Science\\ The University of New South Wales Canberra, Australia}
\email{andrew.yang1@adfa.edu.au}

\date\today
\subjclass[2010]{Primary: 11M26, 42A05; Secondary: 11Y35}
\keywords{Riemann zeta-function, zero-free region, non-negative trigonometric polynomials, simulated annealing.}

\begin{abstract}
We prove that the Riemann zeta-function $\zeta(\sigma + it)$ has no zeros in the region $\sigma \geq 1 - 1/(55.241(\log\abs{t})^{2/3} (\log\log \abs{t})^{1/3})$ for $\abs{t}\geq 3$.
In addition, we improve the constant in the classical zero-free region, showing that the zeta-function has no zeros in the region $\sigma \geq 1 - 1/(5.558691\log\abs{t})$ for $\abs{t}\geq 2$.
We also provide new bounds that are useful for intermediate values of $\abs{t}$.
Combined, our results improve the largest known zero-free region within the critical strip for $3\cdot10^{12} \leq \abs{t}\le \exp(64.1)$ and $\abs{t} \geq \exp(1000)$.
\end{abstract}

\maketitle

\section{Introduction}

Let $\zeta(s)$ denote the Riemann zeta-function, where $s= \sigma+ it$ is a complex variable.
All non-trivial zeros of $\zeta(s)$ lie in the critical strip with $0<\sigma <1$. Determining regions in the critical strip that are devoid of zeros of $\zeta(s)$ is of great interest in number theory. Such regions take the shape $\sigma \geq 1 - 1/f(\abs{t})$ for some function $f(t)$ tending to infinity with $t$. The so-called classical zero-free region has $f(t) = R_0 \log t$, where $R_0$ is a positive constant. An asymptotically larger region, proved by Korobov \cite{Korobov} and Vinogradov \cite{Vinogradov}, has $f(t) = R_1 (\log t)^{2/3} (\log\log t)^{1/3}$, for some constant $R_1 > 0$.

Considerable effort has been made to make these results explicit, and we briefly recall the sharpest zero-free regions known.
For the Korobov--Vinogradov region, in 2000 Ford \cite{Ford2000} (see also \cite{Ford2} for minor corrections) established two explicit bounds, one holding for essentially all $t$, and a larger one holding for sufficiently large $t$. 
In the former case, Ford proved that there are no zeros of $\zeta(s)$ in the region
\begin{equation}\label{eqnFordKV}
\sigma \geq 1 -\frac{1}{57.54 (\log \abs{t} )^{2/3} (\log\log \abs{t})^{1/3}}, \quad \abs{t}\geq 3.
\end{equation}
In the latter case, Ford showed that there are no zeros when $t$ is sufficiently large and
\begin{equation}\label{eqnFordKVA}
\sigma \geq 1 -\frac{1}{49.13 (\log \abs{t} )^{2/3} (\log\log \abs{t})^{1/3}}.
\end{equation}
This was recently slightly improved by Nielsen \cite{Nielsen}, who showed that there are no zeros with
\begin{equation}\label{eqnNielsen}
\sigma \geq 1 -\frac{1}{49.08 (\log \abs{t} )^{2/3} (\log\log \abs{t})^{1/3}}
\end{equation}
for sufficiently large $\abs{t}$.
For the classical region, in 2015 the first two authors \cite{HoffTrudgian} proved that the region
\begin{equation}\label{eqnMTc}
\sigma \geq 1 -\frac{1}{5.573412 \log \abs{t}}, \quad \abs{t}\geq 2
\end{equation}
is devoid of zeros of the zeta-function. This region is wider than that of \eqref{eqnFordKV} for all $\abs{t} \leq \exp(10151.5)$.

We improve \eqref{eqnFordKV}, \eqref{eqnNielsen} and \eqref{eqnMTc} in this article.
For the first of these, we establish the following theorem.

\begin{theorem}\label{Main}
There are no zeros of $\zeta(\sigma + it)$ for $\abs{t}\geq 3$ and 
\begin{equation}\label{fun}
\sigma \ge 1 - \frac{1}{55.241(\log\abs{t})^{2/3} (\log\log \abs{t})^{1/3}}.
\end{equation}
\end{theorem}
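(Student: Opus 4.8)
The plan is to follow the classical Korobov–Vinogradov method as refined by Ford, isolating each place where a numerical constant enters and optimising it. The skeleton is the inequality, valid for a putative zero $\rho = \beta + i\gamma$ close to the line $\sigma = 1$, obtained from a non-negative trigonometric polynomial $\sum_{k=0}^{K} b_k \cos(k\theta)$ with $b_0 < b_1$ and all $b_k \ge 0$: applying it to $-\operatorname{Re}\sum (\Lambda(n)/n^{s})$ at the points $s = \sigma$, $s = \sigma + i\gamma$, $\dots$, $s = \sigma + iK\gamma$ yields
\begin{equation*}
0 \le -b_0 \frac{\zeta'}{\zeta}(\sigma) - \sum_{k=1}^{K} b_k \operatorname{Re}\frac{\zeta'}{\zeta}(\sigma + ik\gamma) + \text{(contribution of $\rho$ and its neighbours)}.
\end{equation*}
The term $-\zeta'/\zeta(\sigma)$ is bounded by $1/(\sigma-1) + \tfrac12\log\text{(something)} + \dots$, while the terms at $\sigma + ik\gamma$ need an upper bound for $\operatorname{Re}(-\zeta'/\zeta(\sigma + it))$ in the range where the Korobov–Vinogradov bound $|\zeta(\sigma+it)| \le A t^{B(1-\sigma)^{3/2}}(\log t)^{2/3}$ (with explicit $A, B$) applies. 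First I would assemble, from whatever explicit subconvexity-type estimate is invoked (a Vinogradov-type exponential sum bound, hence an explicit $|\zeta(\sigma+it)|$ estimate in the strip $\sigma$ near $1$), the constant that feeds into the bound for $\operatorname{Re} \zeta'/\zeta$ via the Borel–Carathéodory lemma and the Hadamard/Jensen three-circles machinery; this is where $55.241$ ultimately comes from.

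Next I would set up the optimisation as a function of several parameters: the degree $K$ and coefficients $(b_k)$ of the trigonometric polynomial, the radii in the Borel–Carathéodory and Hadamard steps, the offset of $\sigma$ from $1$ measured in units of $1/((\log t)^{2/3}(\log\log t)^{1/3})$, and the splitting point between "small $t$" (handled by the classical region or by direct verification using known zero computations, e.g. the $3\cdot 10^{12}$ figure from the abstract) and "large $t$" (handled by the asymptotic argument). The output is an inequality of the shape $c_1/d - c_2 d^{1/2}\cdot(\text{error factors}) \le 0$ in the scaled variable $d$, which is impossible once the numerical constants satisfy the right relation; choosing the trig polynomial and radii to make this relation as favourable as possible yields the constant $55.241$. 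I would use a non-negative trigonometric polynomial found by the simulated-annealing search advertised in the keywords, improving on the degree-$8$ or so polynomials used by Ford.

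The main obstacle, and the bulk of the work, is controlling all the secondary (lower-order but not negligible) terms uniformly down to $|t| \ge 3$: the $\log\log t$ factors are genuinely small there, so the asymptotic analysis cannot simply be quoted — one must carry honest, fully explicit bounds for $-\zeta'/\zeta(\sigma)$, for the sum over nearby zeros (using an explicit bound on $N(T+1)-N(T)$), and for the region of validity of the underlying $\zeta$-bound, and then verify the final inequality numerically over the whole range rather than just in the limit. A secondary difficulty is the interface with the classical region and with direct computation: one must check that the claimed region in \eqref{fun} is actually implied, for the smallest relevant $|t|$, either by \eqref{eqnMTc} or by the new classical constant $5.558691$ or by the RH verification up to height $3\cdot 10^{12}$, so that the Korobov–Vinogradov argument only needs to take over where it is genuinely strong. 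I expect the trigonometric-polynomial optimisation itself to be comparatively routine once the explicit $\zeta$-bound and its constant are pinned down.
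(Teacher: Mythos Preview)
Your overall outline is in the right spirit---non-negative trigonometric polynomial, explicit Richert-type bound $|\zeta(\sigma+it)|\le A|t|^{B(1-\sigma)^{3/2}}(\log|t|)^{2/3}$, range-splitting, and a numerical optimisation---but two concrete pieces are missing or off, and one of them is load-bearing.

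First, the analytic engine is not Borel--Carath\'eodory plus Hadamard three circles. The paper follows Ford's \emph{zero-detector} approach: an exact integral formula for $-\Re\,\zeta'/\zeta(1+it)$ along the two vertical lines $\sigma=1\pm\eta$ (Lemma~\ref{fordlem22}), combined with a Heath-Brown--type smoothing weight $f(u)=\lambda e^{\lambda u}w(\lambda u)$ whose Laplace transform has a specific closed form (Lemma~\ref{fordlem46}). This mollification is what turns the Richert bound into a clean inequality (Lemma~\ref{fordlem71}) relating $1-\beta$, $\lambda$, and $\eta=E(L_2/BL_1)^{2/3}$; the constants $w(0)$, $W'(0)$, and $C_5(R)$ from this construction appear explicitly in the final arithmetic. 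A Borel--Carath\'eodory argument could in principle bound $\Re\,\zeta'/\zeta$, but it would not reproduce the precise inequality structure the optimisation relies on, and you would not arrive at $55.241$.

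Second, and more seriously, your two-way split (RH verification/classical region for small $t$; asymptotic argument for large $t$) does not close. The paper needs the asymptotic argument only from $T_0=\exp(52238)$ onward, because for smaller $T_0$ the secondary terms $Y(t)$ are too large and the contradiction in \eqref{main_proof_last_line} fails. But the classical region $1/(5.558691\log|t|)$ is only wider than \eqref{fun} up to about $\exp(8928)$. The gap $\exp(8928)\le t\le\exp(52238)$ is covered by a separate \emph{intermediate} zero-free region (Theorem~\ref{thmIntZFR}), proved by running the same zero-detector machinery with $\eta$ fixed at $1/2$ and inputting Patel's sub-Weyl bound $|\zeta(\tfrac12+it)|\le 307.098|t|^{27/164}$. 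Without this intermediate result you cannot take $T_0$ large enough, and the constant degrades. Your plan to ``verify the final inequality numerically over the whole range'' will not succeed here, because the asymptotic inequality itself is simply false for $t$ near $\exp(10^4)$; a genuinely different auxiliary region is required.
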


Our improvement in Theorem~\ref{Main} over \eqref{eqnFordKV} is a result of several ingredients.
\begin{itemize}
\item We employ a trigonometric polynomial of large degree: this produces our largest improvement.
\item We optimize the choice of certain parameters to account for secondary error terms. 
\item We use improved intermediate zero-free regions to cover medium-sized $t$ values where the argument for the asymptotic region does not perform as well.
\item We employ sharper explicit estimates on the growth of the zeta-function in the critical strip.
\item We use a new height up to which the Riemann hypothesis (RH) has been proved.
\end{itemize}

It appears substantially more difficult to improve the constant in the asymptotic region \eqref{eqnNielsen}.
We identify three potential avenues of improvement: using a better smoothing function, finding a more favorable trigonometric polynomial, and improving the constant $B$ in Richert's bound---see \eqref{rick}.
In \cite{heath_brown_zero_1992} and \cite{Xylouris}, improvements related to the first approach have been largely explored.
In this work, we exploit the second method to produce the following improvement.  

\begin{theorem}\label{Main2}
For sufficiently large $\abs{t}$, there are no zeros of $\zeta(\sigma + it)$ with
\begin{equation}\label{Theorem2}
\sigma \ge 1 - \frac{1}{48.1588(\log \abs{t})^{2/3}(\log \log \abs{t})^{1/3}}.
\end{equation}
\end{theorem}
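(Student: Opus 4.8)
The plan is to follow the classical Korobov--Vinogradov machinery as in Ford's work \cite{Ford2000}, retaining all of his analytic inputs except the trigonometric polynomial, which we replace by a more favorable one. We start from the standard inequality obtained by combining a non-negative trigonometric polynomial $\sum_{k=0}^{K} b_k \cos k\theta \geq 0$ (with $b_k \geq 0$, $b_0 < b_1$) with the Dirichlet series for $-\zeta'/\zeta$. Applied with a Vinogradov-type smoothing and a near-zero $\rho = \beta + i\gamma$, this yields, after introducing a parameter $\sigma_0 = 1 + \lambda/(\log\gamma)^{2/3}(\log\log\gamma)^{1/3}$ and invoking Richert's bound $\zeta(\sigma+it) \ll |t|^{B(1-\sigma)^{3/2}}\log^{2/3}|t|$ from \eqref{rick}, an inequality of the rough shape
\begin{equation}\label{eqnPlanMaster}
1 - \beta \geq \frac{c_1}{(\log\gamma)^{2/3}(\log\log\gamma)^{1/3}}\bigl(1 + o(1)\bigr),
\end{equation}
where $c_1$ is a function of the polynomial coefficients $b_k$, the parameter $\lambda$, the Richert constant $B$, and the parameters governing the smoothing function. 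The constant $48.1588$ is then $1/c_1$ after optimizing over all free parameters.

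First I would isolate, from Ford's derivation, the precise optimization problem: the final constant is the value of an explicit functional $\mathcal{F}(b_0,\dots,b_K;\lambda;\text{smoothing params})$ minimized subject to the non-negativity constraint on the polynomial and the normalization $b_0 < b_1$. Holding the smoothing function and $B$ fixed at the values Ford uses, this reduces to a one-parameter-family optimization over non-negative cosine polynomials. Next I would search for a good polynomial of substantially larger degree than the degree-$8$ polynomial classically used: one sets up the semidefinite/linear-programming formulation of ``non-negative trigonometric polynomial with prescribed linear functional'' and runs a numerical optimization (the paper's abstract advertises simulated annealing for exactly this task). Having produced an explicit polynomial with rational or high-precision coefficients, I would re-optimize $\lambda$ (and any smoothing parameters that remain free) against it to pin down the numerical constant, verifying that it comes out below $48.16$.

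The key steps, in order: (i) recall Ford's inequality and identify the functional to be optimized; (ii) reduce to a clean optimization over non-negative cosine polynomials $V(\theta) = \sum b_k \cos k\theta$ with $b_k \geq 0$; (iii) numerically locate a high-degree polynomial making the functional small, using the SDP/LP characterization of non-negativity (every such polynomial is $|P(e^{i\theta})|^2$ for a polynomial $P$); (iv) with the polynomial fixed, re-optimize the continuous parameters and certify the resulting constant $< 48.1588$ rigorously (rational arithmetic, interval methods); (v) confirm that all the $o(1)$ error terms in \eqref{eqnPlanMaster} genuinely vanish as $|t| \to \infty$, so that the bound holds for all sufficiently large $|t|$ exactly as stated — no explicit range is claimed here, which removes the need to track secondary terms quantitatively.

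The main obstacle is step (iii) together with the certification in step (iv): the optimization landscape over high-degree non-negative trigonometric polynomials is non-convex once coupled with $\lambda$, so finding a polynomial that actually beats Nielsen's $49.08$ is genuinely hard and is where the real content lies — this is precisely why the authors resort to simulated annealing rather than a closed-form construction. A secondary difficulty is that improvements here are subject to diminishing returns: as noted in the excerpt, the constant in the asymptotic region is ``substantially more difficult to improve'' than the constant in \eqref{eqnFordKV}, because the polynomial only enters through a particular averaged functional and its influence is partly damped by the fixed value of $B$ in Richert's bound. One must therefore verify that the new polynomial's gain is not illusory — i.e. that it survives the re-optimization of $\lambda$ and does not merely trade one term of the functional against another — before one can assert the clean constant $48.1588$.
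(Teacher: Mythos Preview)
Your outline is correct and matches the paper's approach: Ford's machinery is kept intact, a higher-degree non-negative trigonometric polynomial (the paper uses the degree-$46$ polynomial $P_{46}$ in \eqref{eqnPoly46}, found by simulated annealing via the Fej\'er--Riesz representation $|P(e^{i\theta})|^2$ you mention) replaces the degree-$4$ one of Ford (not degree $8$), and the asymptotic constant is read off from the explicit functional $\bigl(\cos^2\theta\,(4/3)^{2/3}(b_0/b)(1+b_0/b)^{-1/3}\bigr)^{-1}B^{2/3}$ after choosing $E$ optimally as in \eqref{ford_E_defn}. The one ingredient your plan does not name is the zero-density input used to guarantee a zero-free starting rectangle at some sufficiently large $T_0$, which is what allows the contradiction argument of Section~\ref{sec:mainproof} to launch; otherwise your steps (i)--(v) are exactly what the paper does.
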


We remark that Nielsen's result \eqref{eqnNielsen} improved \eqref{eqnFordKVA} by replacing a particular trigonometric polynomial that was used in Ford's argument.
Ford used a polynomial of degree $4$, while Nielsen adopted one of degree $5$ (see Section~\ref{subsecTrigPolys}).
We use polynomials of substantially larger degree here for our improvements.
Also, we mention that Khale \cite{Khale} recently established analogues of Theorems~\ref{Main} and~\ref{Main2} for Dirichlet $L$-functions. While some of the ingredients we employ in this article do not appear to be suitable for use with Dirichlet $L$-functions, we note that the use of higher-degree trigonometric polynomials may lead to some improvements in \cite{Khale}.

For the classical region, in \cite{HoffTrudgian} the inequality \eqref{eqnMTc} was established by extending some work of Kadiri \cite{Kadiri} by constructing a more favorable trigonometric polynomial, by optimizing some analytic arguments, and by employing the verification of RH up to $3.06\cdot10^{10}$ from \cite{Platt15}.
It was also recorded in \cite{HoffTrudgian} that if RH were verified for $\abs{t} \le 3\cdot 10^{11}$, then the constant in \eqref{eqnClassical} could be replaced by $5.5666305$, and this is now permissible due to the verification performed in \cite{platt_riemann_2021}.
We take the opportunity here to record a further improvement for the classical region, using two ideas.
First, RH was verified for $\abs{t} \le 3\cdot 10^{12}$ in \cite{platt_riemann_2021}.
Second, in 2014, Jang and Kwon \cite{jang_note_2014} derived another improvement to Kadiri's result by different means, involving the replacement of a particular smoothing function employed in \cite{Kadiri}, and we incorporate this as well.
We prove the following theorem.

\begin{theorem}\label{thmClassical}
There are no zeros of $\zeta(\sigma + it)$ for $\abs{t}\geq 2$ and 
\begin{equation}\label{eqnClassical}
\sigma \geq 1 - \frac{1}{5.558691 \log \abs{t}}.
\end{equation}
\end{theorem}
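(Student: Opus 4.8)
The plan is to follow the framework of Kadiri~\cite{Kadiri}, as refined in~\cite{HoffTrudgian}, incorporating the two new ingredients noted above: the larger verified height for RH and the Jang--Kwon smoothing function. The skeleton of the argument is the classical one. Suppose $\rho = \beta + i\gamma$ is a hypothetical zero of $\zeta$ with $\beta$ close to $1$ and $\gamma \geq 2$ (so in particular $\gamma > 3\cdot 10^{12}$, since RH has been verified below that height in~\cite{platt_riemann_2021}). Fix a non-negative trigonometric polynomial $P(\theta) = \sum_{k=0}^{d} a_k\cos(k\theta)$ with $a_k \geq 0$ and $a_1 > a_0$. Evaluating $-\operatorname{Re}(\zeta'/\zeta)$ at the points $\sigma + ik\gamma$ for $k = 0,1,\dots,d$, with $\sigma$ slightly exceeding $1$, weighting by $a_k$, and using $P \geq 0$, one obtains an inequality bounding a quantity like $1/(\beta-1)$ from above in terms of $\log\gamma$ and lower-order terms. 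The constant in the zero-free region emerges from optimizing this inequality.

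The analytic heart of the method is a smoothed explicit formula: following Kadiri, one introduces an even, non-negative, compactly supported smoothing function and replaces the raw sum over zeros by a weighted version, controlled via a contour-integral representation of $\zeta'/\zeta$ together with explicit bounds for $\zeta$ near the line $\sigma = 1$. The first modification I would make is to substitute the smoothing function of Jang and Kwon~\cite{jang_note_2014} for Kadiri's original choice; this improves the values of the integral functionals of the kernel (the quantities that govern, respectively, the main term and the error coming from the explicit formula and the contour shift). Concretely, this means recomputing the constants appearing in Kadiri's lemmas with the new kernel, which amounts to evaluating a handful of definite integrals and the relevant transform of the kernel, and checking that the positivity and convergence conditions underpinning the argument continue to hold.

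Next I would assemble the resulting inequality into the form $\beta \leq 1 - 1/(R_0\log\gamma)$, where $R_0$ depends on the coefficients $\{a_k\}$ (subject to the constraint $P(\theta)\geq 0$, which can be imposed through a Fej\'er/sum-of-squares representation), on the remaining free parameters in Kadiri's setup, and on the verified RH height $H$. Taking $H = 3\cdot 10^{12}$ from~\cite{platt_riemann_2021} enters in two ways: it removes from consideration any zero with $\gamma \leq H$, and it allows the secondary error terms to be estimated with $\log H$ in place of a cruder quantity, tightening the final bound. One then performs a numerical optimization over the trigonometric polynomial and the remaining parameters to minimize $R_0$, and finally verifies, with all error terms rigorously bounded, that $R_0 \leq 5.558691$ is attained, so that a zero with $\sigma$ satisfying~\eqref{eqnClassical} is impossible. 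For $2 \leq \abs{t} \leq 3\cdot 10^{12}$ the statement is immediate from the verification of RH.

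The main obstacle is the same as in~\cite{HoffTrudgian}: carrying out the constrained numerical optimization while keeping every secondary term under rigorous control — in particular the contribution of the bound for $\log\zeta$ near $\sigma = 1$, the truncation error in the sum over zeros, and the dependence on $H$. Since~\cite{HoffTrudgian} already obtained $5.573412$ by this route, and remarked there that the height $3\cdot 10^{11}$ alone would yield $5.5666305$, the remaining task is to confirm that the increase to $3\cdot 10^{12}$ together with the Jang--Kwon kernel produces the stated constant $5.558691$.
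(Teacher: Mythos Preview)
Your proposal is correct and follows essentially the same approach as the paper: the Kadiri--Mossinghoff--Trudgian iterative scheme, with the Jang--Kwon smoothing kernel substituted for Kadiri's original choice and the RH verification height raised to $3\cdot 10^{12}$. The paper is simply more specific about the implementation---it selects Heath-Brown's function $h^{(1)}_{1,\theta}$ with $\theta=1.13489$, reuses the fixed degree-$16$ trigonometric polynomial from~\cite{HoffTrudgian} rather than re-optimizing it, and records the seven iterations of Kadiri's procedure that converge to $R_0=5.5586905$.
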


We remark that the region \eqref{eqnClassical} is wider than that of \eqref{fun} for $\abs{t}<\exp(8928)$.

In order to obtain a good constant in Theorem~\ref{Main}, it is necessary to deduce another zero-free region to cover medium-sized values of $t$, in addition to \eqref{eqnClassical}.
We obtain such a zero-free region by using bounds on the growth rate of the zeta-function on the half-line and establish the following theorem.

\begin{theorem}[Intermediate zero-free region]\label{thmIntZFR}
There are no zeros of $\zeta(\sigma + it)$ for $\abs{t} \ge \exp(1000)$ and 
\begin{equation*}
\sigma > 1 - \frac{0.05035}{h(t)} + \frac{0.0349}{h^2(t)},
\end{equation*}
where $h(t) = \frac{27}{164}\log \abs{t} + 7.096$.
\end{theorem}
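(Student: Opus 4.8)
The plan is to run a de la Vallée Poussin–type argument, with the growth of $\zeta$ on the critical line entering through a Borel--Carathéodory / Hadamard three-circles estimate for $\zeta'/\zeta$ near the line $\sigma=1$, and with a non-negative trigonometric polynomial supplying the required positivity. Suppose toward a contradiction that $\zeta(\rho_0)=0$ with $\rho_0=\beta_0+i\gamma_0$, that $T:=\abs{\gamma_0}\ge\exp(1000)$, and that $\beta_0>1-0.05035/h(T)+0.0349/h^2(T)$. By the functional equation we may take $\gamma_0>0$, and by Theorem~\ref{thmClassical} we may assume $1-\beta_0<1/(5.558691\log T)$, which we use only to guarantee that $\rho_0$ lies comfortably inside the discs introduced below. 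Put $g(s)=(s-1)\zeta(s)$, entire and non-vanishing for $\sigma\ge 1$.

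I would use three inputs. First, for $\sigma\in(1,\sigma_0]$ with $\sigma_0$ a fixed constant slightly above $1$, the Dirichlet series for $-\zeta'/\zeta$ gives an explicit bound $-\zeta'/\zeta(\sigma)\le 1/(\sigma-1)+B_1$. Second, a lower bound $\abs{\zeta(1+a+it)}\ge 1/\zeta(1+a)$, valid for any $a\in(0,\tfrac12)$, obtained from $1/\zeta(s)=\sum_n\mu(n)n^{-s}$ on $\sigma>1$. Third, and crucially, explicit upper bounds for $\abs{\zeta(\sigma+it)}$ on the strip $\tfrac12\le\sigma\le 1$ (for $\abs{t}\ge\exp(1000)$), obtained from known explicit bounds for $\abs{\zeta(\tfrac12+it)}$ on the half-line together with the Phragm\'en--Lindel\"of convexity principle and an explicit bound for $\abs{\zeta(1+it)}$; these yield bounds of the shape $\log\abs{\zeta(\sigma+it)}\le\tfrac{27}{82}(1-\sigma)\log\abs{t}+O(1)$. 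Consequently, on any disc centred at $1+a+i\tau$ of radius $R=\tfrac12+a$ (so that it just reaches $\sigma=\tfrac12$) one has $\log\abs{g(s)}\le h(\tau)+O(1)$ after the $O(1)$ is absorbed into the additive constant of $h$; note that $h(\tau)$ is, up to the $O(1)$, exactly the logarithm of the largest value of $\abs{\zeta}$ on the segment $\tfrac12\le\sigma\le 1$ at height $\tau$.

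Now fix $\tau\in\{\gamma_0,2\gamma_0\}$ and apply the three-circles estimate to $g$ on concentric discs about $s_0=1+a+i\tau$ of radii $R>R/2>R/4$, combining the upper bound just described on $\abs{s-s_0}=R$ with the lower bound $\abs{g(s_0)}\ge\abs{\zeta(1+a+i\tau)}$ at the centre. This produces, for $\sigma$ in a small disc about $1+a$,
\[
-\operatorname{Re}\frac{\zeta'}{\zeta}(\sigma+i\tau)\ \le\ -\sum_{\rho}\operatorname{Re}\frac{1}{\sigma+i\tau-\rho}\ +\ \Theta\bigl(h(\tau)+O(\log(1/a))\bigr),
\]
the sum running over the zeros $\rho$ of $\zeta$ in the disc and $\Theta$ being the explicit constant of the three-circles estimate (depending only on the radii). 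Every term of the sum is non-negative because $\sigma>1>\operatorname{Re}\rho$, so for $\tau=\gamma_0$ we keep only the $\rho_0$-term, getting $-\operatorname{Re}\zeta'/\zeta(\sigma+i\gamma_0)\le -1/(\sigma-\beta_0)+\Theta h(\gamma_0)+O(\log(1/a))$, while for $\tau=2\gamma_0$ we discard the whole sum and use $h(2\gamma_0)=h(\gamma_0)+\tfrac{27}{164}\log 2$. Feeding the first input and these two inequalities into the positivity relation $\sum_k a_k\bigl(-\operatorname{Re}\zeta'/\zeta(\sigma+ik\gamma_0)\bigr)\ge 0$, valid for any non-negative cosine polynomial $\sum_k a_k\cos(k\theta)$ by non-negativity of the Dirichlet coefficients of $-\zeta'/\zeta$, gives
\[
\frac{a_1}{\sigma-\beta_0}\ \le\ \frac{a_0}{\sigma-1}\ +\ (a_1+a_2)\Theta\,h(\gamma_0)\ +\ O(\log(1/a)).
\]
Setting $\sigma-1=\lambda/h(\gamma_0)$ and optimizing over the polynomial $\{a_k\}$, the scale parameter $\lambda$, the offset $a$, and the radius then yields $\beta_0\le 1-c_1/h(\gamma_0)+c_2/h^2(\gamma_0)$; the numbers $c_1=0.05035$ and $c_2=0.0349$ are what this optimization delivers, contradicting the assumption on $\beta_0$.

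The difficulty is entirely quantitative. Pushing $c_1$ up to $0.05035$ calls for a carefully chosen non-negative trigonometric polynomial of moderately high degree in place of $3+4\cos\theta+\cos 2\theta$ (as elsewhere in this paper), for an economical form of the three-circles estimate so that $\Theta$ is as small as possible, and for a balanced choice of the radius $R=\tfrac12+a$ and of $a$ — increasing $a$ sharpens the lower bound $\abs{\zeta(1+a+i\tau)}\ge 1/\zeta(1+a)$ but narrows the usable disc. It also demands a tight accounting of every secondary term: the $O(\log(1/a))$ from the second input, the $O(1)$ in the convexity bound, the gap $h(2\gamma_0)-h(\gamma_0)$, and the $O(1/h)$ terms dropped in the final optimization. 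This bookkeeping is exactly what fixes the additive constant $7.096$ in $h(t)$ and produces the refining $c_2/h^2$ term, and the hypothesis $\abs{t}\ge\exp(1000)$ marks the threshold beyond which these secondary contributions are negligible against $h(t)$ and the explicit bounds of the third input are comfortably in force.
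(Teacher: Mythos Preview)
Your framework is sound in outline --- feed a sub-Weyl bound on the half-line into an estimate for $-\operatorname{Re}\zeta'/\zeta$ near $\sigma=1$, combine with a non-negative trigonometric polynomial, optimize --- but it is \emph{not} the route the paper takes, and the quantitative conclusion you assert does not follow from the method you describe.

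The paper does not use Borel--Carath\'eodory or Hadamard three-circles at all. It uses Ford's ``zero-detector'' identity (Lemma~\ref{fordlem22}) in the mollified form of Lemma~\ref{fordlem46}, with the specific Heath-Brown smoothing function $f$ built from \eqref{f_defn}, together with the degree-$40$ polynomial $P_{40}$ of \eqref{eqnPoly40}. This yields the inequality of Lemma~\ref{real_part_inequality_intermediate}, whose leading constant $0.17949\approx\cos^2\theta$ comes from the smoothing function and whose ratio $b_0/b$ comes from $P_{40}$; the number $0.05035$ is essentially $\cos^2\theta\cdot b_0/b$. The theorem is then finished not by a direct optimization but by a contradiction argument on $M:=\max_{t\ge t_0}Y(\beta,t)$, using the existence of a zero-free rectangle of width $\lambda$ to bootstrap.

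Your proposal has two concrete gaps. First, you simply assert that ``the numbers $c_1=0.05035$ and $c_2=0.0349$ are what this optimization delivers''. With the three-circles route the multiplier $\Theta$ in front of $h(\gamma_0)$ is strictly less efficient than the factor produced by the smoothed zero-detector, and there is no reason to expect the same constants; the whole point of Ford's machinery (and of Heath-Brown's choice of $f$) is to squeeze this factor. Second, your displayed inequality handles only $\tau\in\{\gamma_0,2\gamma_0\}$ and carries only $(a_1+a_2)\Theta h(\gamma_0)$, i.e.\ a degree-$2$ polynomial, yet you acknowledge that reaching $c_1=0.05035$ requires a high-degree polynomial; for degree $K$ you would need the estimate at all heights $k\gamma_0$, $1\le k\le K$, and a term $\bigl(\sum_{k\ge 1}a_k\bigr)\Theta h(K\gamma_0)$, which you have not set up. Without these ingredients the argument as written cannot produce the stated constants, nor the additive $7.096$ in $h(t)$ (which in the paper is $\log 307.098 + 1.3686$, the latter arising from the contradiction argument).
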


We record one additional result of Ford: from \cite[Thm.\ 3]{Ford2000}, we see that if $t\geq 1.88\cdot 10^{14}$ then $\zeta(\sigma+it)\neq0$ for
\begin{equation}\label{eqnFordMedium}
\sigma \geq 1 - \frac{0.04962-\frac{0.0196}{J(t)+1.15}}{J(t) + 0.685 + 0.155\log\log t},
\end{equation}
where $J(t) = \frac{1}{6}\log t +\log\log t +\log3$.

By combining these results, we may summarize the largest known zero-free region for the Riemann zeta-function within the critical strip for each height $t$.
For $\abs{t}\leq3\cdot10^{12}$, all zeros are known to lie on the critical line.
The region of Theorem~\ref{thmClassical} provides the best known bound for $3\cdot10^{12}<\abs{t}\leq\exp(64.1)$, then for $\exp(64.1)<\abs{t}\leq\exp(1000)$ the expression \eqref{eqnFordMedium} produces the widest region.
After this, for $\exp(1000)<\abs{t}\leq\exp(52238)$, Theorem~\ref{thmIntZFR} is best, and for $\abs{t}>\exp(52238)$ the result of Theorem~\ref{Main} produces the widest known region.

This article is organized in the following manner.
Section~\ref{secApplications} lists some immediate applications of our results.
Section~\ref{secPrelims} reviews some results we require from the literature.
Section~\ref{sec:aux_lemmas} contains a number of lemmas that we require for the proofs of our main theorems.
Section~\ref{sec:mainproof} contains the proof of Theorem~\ref{Main}.
Section~\ref{sec:intermediate_proof} describes the proof of Theorem~\ref{thmIntZFR}, and Section~\ref{sec:proofMain2} has the proof of Theorem~\ref{Main2}.
Section~\ref{sec:comp} summarizes our computations for determining the constants in Theorems~\ref{Main} and~\ref{Main2}.
Section~\ref{sec:Classical} describes the improvement in the classical region for Theorem~\ref{thmClassical}, and Section~\ref{secFuture} suggests some potential future work.

\section{Some immediate applications}\label{secApplications}

One application of Theorem~\ref{Main} concerns the existence of primes between successive cubes. Dudek \cite{DudekCubes} used \eqref{eqnFordKV} to show that there exists a prime between $n^3$ and $(n + 1)^3$ for all $n \ge \exp(\exp(33.3))$. This was subsequently improved by Cully-Hugill \cite{cully_hugill_primes} to all $n \ge \exp(\exp(32.892))$. By substituting Theorem~\ref{Main} into \cite[\S4]{cully_hugill_primes} and choosing $\alpha = 1/3 + 10^{-13}$, we obtain the modest improvement $n \ge \exp(\exp(32.76))$.

Another application is explicit bounds on the error term of the prime number theorem---see Johnston and Yang \cite{johnston_some_2022}, who developed work of Platt and Trudgian \cite{PT} particularly with reference to the Korobov--Vinogradov zero-free region.
Other related recent works such as \cite{BMOR,BKLNW,CHL,Fiori} required explicit zero-free regions to obtain precise estimates.

As per Ford \cite[p.\ 566]{Ford}, one can use Theorem~\ref{Main2} to improve on the error term in the prime number theorem. From the work of Pintz \cite{Pintz}, a zero-free region like that in \eqref{Theorem2} with $c$ in place of $48.1588$ shows that  
\begin{equation*}
\pi(x) - \textrm{li}(x) \ll x \exp\{ -d(\log x)^{3/5} ( \log\log x)^{-1/5}\},
\end{equation*}
where 
\begin{equation*}
d = \left( \frac{5^{6}}{2^{2} \cdot 3^{4} \cdot c^{3}}\right)^{1/5}.
\end{equation*}
With $c = 48.1588$ from Theorem~\ref{Main2}, we obtain $d \ge 0.2123$, improving on the current best value of $0.2098$.

\section{Preliminaries}\label{secPrelims}

To assist in our argument we review the following results from the literature. 
\subsection{Bounds on $\zeta(s)$ near the 1-line}
The underlying philosophy 
in furnishing zero-free regions is to estimate $\zeta(s)$ in a region close to the line $\sigma = 1$. Richert's theorem \cite{Richert67} (see also \cite[p.\ 135]{Titchmarsh}), accomplishes this by proving that there exist positive constants $A$ and $B$ for which
\begin{equation}\label{rick}
\abs{\zeta(\sigma + it)} \leq A \abs{t}^{B(1-\sigma)^{3/2}} (\log \abs{t})^{2/3}, \quad \abs{t}\geq 3,\; \frac{1}{2} \leq \sigma \leq 1.
\end{equation}
Ford \cite{Ford} obtains a relatively small value of $B$ while maintaining a completely explicit value of $A$: in (\ref{rick}) one can take $A= 76.2$ and $B=4.45$. We remark in passing that the advances made in \cite{Preo} may be applied to Ford's paper \cite{Ford}, and should lead to a slight reduction in the value of $B$ in (\ref{rick}). 

We also require bounds on $\zeta(s)$ slightly to the right of $\sigma = 1$.
A result recorded in Bastien and Rogalski \cite{Bastien_convexite_2002}, originally due to O. Ramar\'e, states that for $\sigma > 1$, we have
\begin{equation}\label{sigma_bound_1}
\zeta(\sigma) \le \frac{e^{\gamma(\sigma - 1)}}{\sigma - 1},
\end{equation}
where $\gamma$ is Euler's constant. Using the Maclaurin series for $e^{x}$ one sees that the approximation in (\ref{sigma_bound_1}) is very good.

\subsection{Bounds on $\zeta(s)$ on the half-line}
We recall the recent sub-Weyl bound due to Patel \cite{patel_explicit_2021}: for $\abs{t} \ge 3$, we have
\begin{equation}\label{subweyl_bound}
\abs{\zeta(\tfrac{1}{2} + it)} \leq 307.098\abs{t}^{27/164}.
\end{equation}
This is required in the proof of Theorem~\ref{thmIntZFR}  in Section~\ref{sec:intermediate_proof}.

\subsection{Trigonometric polynomials}\label{subsecTrigPolys}
For any $K \ge 2$, let 
\begin{equation*}
P_K(x) = \sum_{k=0}^{K} b_{k} \cos(kx)
\end{equation*}
where $b_k$ are constants such that $b_{k}\geq 0$, $b_{1}>b_{0}$ and $P_K(x) \geq 0$ for all real $x$. We refer to $P_K(x)$ as a $K$th degree non-negative trigonometric polynomial. Choosing a favorable $P_K$ plays an integral part in determining the size of the zero-free region: see \cite{HoffTrudgian} for a detailed history of this problem. 

In \cite{Ford2000}, Ford employed the degree $4$ polynomial
\begin{equation}\label{ford_poly_coefficients}
P_4(x) = (0.225 + \cos x)^2(0.9 + \cos x)^2,
\end{equation}
while Nielsen \cite{Nielsen} adopted the degree $5$ function
\begin{equation}\label{eqnNelsenPoly}
P_5(x) = (1+\cos x)(0.1974476\ldots + \cos x)^2(0.8652559\ldots + \cos x)^2.
\end{equation}
We employ simulated annealing in a large-scale search to determine favorable trigonometric polynomials of higher degree.
We utilize two polynomials in this article.
Theorem~\ref{Main} relies on a polynomial $P_{40}(x)$ with degree $40$ having
\begin{equation}\label{eqnPoly40}
b_0 = 1,\quad b_1 = 1.74600190914994,\quad b = \sum_{k = 1}^{40}b_k = 3.56453965437134,
\end{equation}
while Theorem~\ref{Main2} employs a polynomial $P_{46}(x)$ with degree $46$ where
\begin{equation}\label{eqnPoly46}
b_0 = 1,\quad b_1 = 1.74708744081848,\quad b = \sum_{k = 1}^{46}b_k = 3.57440943022073.
\end{equation}
We show the full polynomials in Tables~\ref{tableP46} and~\ref{tableP40} in Section~\ref{sec:comp}, and provide details there on the process used to find these polynomials as well as a justification that they are non-negative. 

\subsection{Estimates of the zero-counting function}
We also require bounds on the error term of estimates of $N(T)$, the number of zeros of $\zeta(s)$ with $0 \le t < T$. For large $T$, the current best known explicit result is due to Hasanalizade, Shen and Wong \cite[Cor.\ 1.2]{hasanalizade_counting_2021}, who proved that
\begin{equation}\label{NT_bound}
\left|N(T) - \frac{T}{2\pi}\log \frac{T}{2\pi e}\right| \le 0.1038\log T + 0.2573\log\log T + 9.3675
\end{equation}
for all $T \ge e$. The bound in (\ref{NT_bound}) can be improved in the region where RH is known to hold \cite{platt_riemann_2021}, and for some intermediate values of $T$ as well using \cite{PT2}.
Nevertheless, the bound in \eqref{NT_bound} is the best available at present for $T\geq 10^{410}$.
While that bound could be reduced using \eqref{subweyl_bound}, it suffices for our purposes.

\section{Required lemmas}\label{sec:aux_lemmas}

Before proceeding to the proof of Theorem~\ref{Main} in the next section, we review some useful lemmas.
Throughout, let 
\begin{equation*}
L_1 = L_1(t) = \log (Kt + 1),\quad L_2 = L_2(t) = \log\log(Kt + 1),
\end{equation*}
where $K$ is the degree of the trigonometric polynomial under consideration.
Let $b_0$, $b_1$ and $b$ be as in \eqref{eqnPoly40} or \eqref{eqnPoly46}.
In addition, assume throughout that \eqref{rick} holds for some $A, B > 0$.
Unless otherwise stated, our results will remain valid for all $A, B > 0$ for which \eqref{rick} holds.
Since $\zeta(\overline{s}) = \overline{\zeta(s)}$, it suffices to consider only $t > 0$ throughout.

As in Ford \cite{Ford2000}, the main tool used to pass from upper bounds on $\zeta(s)$ to a zero-free region is the ``zero-detector", which expresses $-\Re\frac{\zeta'}{\zeta}(s)$ as an integral involving $\zeta(s)$ over two vertical lines on either side of $\sigma = 1$ (plus a small error term). Concretely, we have the following lemma.

\begin{lemma}[Zero-detector for $\zeta$]\label{fordlem22}
Let $s = 1 + it$ with $t \ne 0$ and $\rho$ run through the non-trivial zeros of $\zeta(s)$. For all $\eta > 0$, except for a set of Lebesgue measure $0$, we have 
\begin{equation*}%\label{fordlem22eqn}
\begin{split}
-\Re \frac{\zeta'}{\zeta}(s) &= \frac{1}{4\eta}\int_{-\infty}^{\infty}\frac{\log\abs{\zeta(s - \eta + \frac{2\eta iu}{\pi})} - \log\abs{\zeta(s + \eta + \frac{2\eta iu}{\pi})}}{\cosh^2 u}\,\text{d}u\\
&\qquad\qquad + \frac{\pi}{2\eta}\sum_{\abs{\Re(s - \rho)} \le \eta}\Re \cot\left(\frac{\pi(\rho - s)}{2\eta}\right),
\end{split}
\end{equation*}
where in the sum, each zero is counted with multiplicity. 
\end{lemma}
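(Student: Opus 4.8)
The plan is to derive this identity as an application of a general "zero-detector" for a meromorphic function, following the Landau-type representation that Ford uses. The starting point is the classical formula expressing $-\frac{\zeta'}{\zeta}(s)$ via the Hadamard product, together with a kernel-smoothing trick. Concretely, I would begin from the partial-fraction expansion
\[
-\Re\frac{\zeta'}{\zeta}(s) = \Re\!\left(\frac{1}{s-1}\right) - \frac{1}{2}\log\pi + \frac{1}{2}\Re\frac{\Gamma'}{\Gamma}\!\left(\frac{s}{2}+1\right) - \sum_{\rho}\Re\frac{1}{s-\rho} + B_0
\]
for an explicit constant $B_0$, valid for $s$ not a zero or pole; but rather than manipulate this directly, the cleaner route is to work with the function
\[
F(z) = \log\abs{\zeta(z)},
\]
which is harmonic away from zeros and the pole, and to use the Poisson-type kernel $\frac{1}{\cosh^2 u}$ on a vertical strip. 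The identity asserted in the lemma is exactly the statement that convolving $\partial_\sigma F$ along the vertical line $\Re z = 1$ against this kernel recovers $-\Re\frac{\zeta'}{\zeta}(1+it)$ up to the contribution of zeros lying within horizontal distance $\eta$ of the line $\sigma=1$.

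The key steps, in order, are as follows. First I would fix $s = 1+it$ and $\eta>0$, and consider the rectangle-like contour consisting of the two vertical lines $\Re z = 1-\eta$ and $\Re z = 1+\eta$. Second, I would invoke the representation of $\log\abs{\zeta(z)}$ as the real part of a holomorphic function on this strip minus the sum of $\log\abs{z-\rho}$ over zeros $\rho$ in the strip (plus the pole term $-\log\abs{z-1}$), which is legitimate since $\zeta$ has only finitely many zeros in any bounded portion and the product converges. Third, I would apply the integral identity
\[
\frac{1}{4\eta}\int_{-\infty}^{\infty}\frac{g(\sigma_0 - \eta + \frac{2\eta iu}{\pi}) - g(\sigma_0 + \eta + \frac{2\eta iu}{\pi})}{\cosh^2 u}\,\mathrm{d}u = -\Re\, g'(\sigma_0)
\]
valid for $g$ holomorphic and of suitable growth on the strip (this is the core kernel computation: the Fourier transform of $\operatorname{sech}^2$ produces the factor that exactly reproduces a derivative, and the difference of the two vertical lines kills the holomorphic "free" part's ambiguity). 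Fourth, for the terms $-\log\abs{z-\rho}$ coming from zeros, the same kernel integral does not vanish when $\rho$ lies in the open strip $\abs{\Re(z-\rho)}<\eta$; instead it evaluates to the cotangent term $\frac{\pi}{2\eta}\Re\cot\!\big(\frac{\pi(\rho-s)}{2\eta}\big)$, by a residue computation (closing the $u$-contour and picking up the pole of $\operatorname{sech}^2$ shifted by the logarithmic singularity of $\log\abs{z-\rho}$). Zeros with $\abs{\Re(s-\rho)}=\eta$ cause the boundary logarithmic singularity to sit exactly on the contour; these constitute the measure-zero set of $\eta$ that must be excluded, so I would remark that for all but countably — hence Lebesgue-null — many $\eta$ no zero has real part exactly $1-\eta$ or $1+\eta$. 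Fifth, the pole at $z=1$: since $\Re s = 1$, the term $-\log\abs{z-1}$ has its singularity on the line $\sigma=1$, but the kernel integral of this term is handled identically to a "zero" with $\Re\rho = 1$, producing a cotangent contribution that combines with $\Re\frac{1}{s-1}$ — or, more cleanly, one checks it contributes $0$ to the net expression because $\Re\frac{\zeta'}{\zeta}$ already accounts for it; I would verify this bookkeeping carefully. Assembling the holomorphic part (giving $-\Re\frac{\zeta'}{\zeta}(s)$) with the zero contributions gives the claimed formula.

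The main obstacle I expect is the rigorous justification of interchanging the sum over zeros with the kernel integral and controlling convergence at infinity on the vertical lines: $\log\abs{\zeta(1\pm\eta+it)}$ grows only like $\log\abs{t}$ as $\abs{t}\to\infty$ (on the right by Ramaré's bound type estimates, on the left by convexity/Richert), whereas the kernel $\operatorname{sech}^2 u$ decays exponentially, so absolute convergence of the $u$-integral is fine; the delicate point is that the Hadamard-product sum $\sum_\rho \Re\frac{1}{s-\rho}$ converges only conditionally, so the interchange must be done by first grouping zeros $\rho$ and $\overline\rho$ (or $1-\rho$) and then passing to the limit, exactly as in the classical derivation of the explicit formula — and one must confirm that only the finitely many zeros with $\abs{\Re(s-\rho)}\le\eta$ survive as genuine cotangent terms while the rest reassemble into the $\Gamma'/\Gamma$ and constant pieces that make up $-\Re\frac{\zeta'}{\zeta}$. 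This is precisely the content of Ford's Lemma, so I would cite \cite{Ford2000} for the detailed estimates and present here only the structure of the argument, noting that the exceptional null set of $\eta$ arises solely from zeros whose real parts hit the boundary lines $1\pm\eta$.
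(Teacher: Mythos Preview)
Your proposal and the paper's proof coincide: both amount to invoking Ford's general zero-detector lemma (Lemma~2.2 of \cite{Ford2000}) and specializing it to $\zeta(s)$ at $s=1+it$, and indeed the paper's entire proof is that single-sentence citation. Your extended sketch of the mechanism behind Ford's lemma is therefore unnecessary here; a few of its details (the precise form of the kernel identity in your third step, and the bookkeeping for the pole at $z=1$) are stated loosely, but since you correctly land on citing \cite{Ford2000} for the result, the approaches agree.
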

\begin{proof}
Follows from taking $\zeta(s)$ and $s = 1 + it$ in Ford \cite[Lemma~2.2]{Ford2000}.
\end{proof}

As is common practice, instead of working directly with Lemma~\ref{fordlem22}, we consider a ``mollified" version, presented in Lemma~\ref{fordlem46}. The choice of the smoothing function $f$ significantly influences the eventual zero-free region constant. As in Ford \cite{Ford2000}, we base our choice of $f$ on Lemma~7.5 of Heath-Brown \cite{heath_brown_zero_1992}. Jang and Kwon \cite{jang_note_2014} obtained improvements for the classical zero-free region by choosing a different mollifier described in Xylouris \cite{Xylouris}, however we find that such a choice of $f$ did not produce significant improvements here for the Korobov--Vinogradov zero-free region.  

We construct the smoothing function the same way as Ford \cite{Ford2000}, which we briefly review here for completeness. 
Given a qualifying trigonometric polynomial $P(x)=\sum_{k=0}^K b_k \cos(kx)$, let $\theta = \theta(b_0,b_1)$ be the unique solution to the equation 
\begin{equation}\label{theta_defn}
\sin^2\theta = \frac{b_1}{b_0}\left(1 - \theta\cot \theta\right),\qquad 0 < \theta < \frac{\pi}{2}.
\end{equation}
For the choice of $b_0, b_1$ in \eqref{eqnPoly40}, we compute
\begin{equation}\label{eqnTheta40}
\theta = 1.13331020636698\ldots
\end{equation}and for \eqref{eqnPoly46} we find
\[
\theta = 1.13269369969232\ldots\,.
\]

Define
\begin{gather*}
g(u) := \begin{cases}
(\cos(u\tan \theta) - \cos\theta)\sec^2\theta, &\abs{u} \le \theta\cot\theta,\\
0, &\text{otherwise},
\end{cases}\\
w(u) := (g * g)(u) = \int_{-\infty}^{\infty}g(t)g(u - t)\,\text{d}t,
\end{gather*}
where, in particular,
\begin{equation}\label{w0_defn}
w(0) = (\theta\tan\theta + 3\theta\cot\theta - 3)\sec^2\theta\,.
\end{equation}
We then choose the smoothing function to be 
\begin{equation}\label{f_defn}
f(u) := \lambda e^{\lambda u}w(\lambda u)
\end{equation}
where $\lambda$ is a positive parameter to be fixed later. Note that $f(u) \ge 0$ since $g(u) \ge 0$. 

We will primarily require properties about the Laplace transform of this smoothing function. Let
\begin{equation*}%\label{W_defn}
F(z) := \int_{0}^{\infty}e^{-zu}f(u)\,\text{d}u
\end{equation*}
denote the Laplace transform of $f(u)$, and similarly let $W(z)$ be the Laplace transform of $w(u)$. The function $W(z)$ has a closed formula, given in Ford \cite{Ford2000}, which we state here for convenience:
\begin{equation*}
W(z) = \frac{w(0)}{z} + W_0(z),
\end{equation*}
\begin{equation}\label{W0_defn}
W_0(z) := \frac{c_0\left(c_2\left[(z + 1)^2e^{-2\theta(\cot\theta)z} + z^2 - 1\right] - c_1z - c_3z^3\right)}{z^2(z^2 + \tan^2\theta)^2},
\end{equation}
where
\begin{align*}
c_0 &:= \frac{1}{\sin\theta \cos^3\theta},
&c_1 := (\theta - \sin\theta \cos\theta)\tan^4\theta,\\
c_2 &:= \tan^3\theta\sin^2\theta,
&c_3 := (\theta - \sin\theta\cos\theta)\tan^2\theta.
\end{align*}
In particular, via a direct substitution, we have 
\begin{equation}\label{Wp0_formula}
W'(0) = \frac{\csc\theta(3(4\theta^2 - 5) + \theta(15 - 4\theta^2)\cot\theta) - 3\theta\sec\theta}{3\sin\theta}.
\end{equation}
This will be useful later in the proof of Lemma \ref{real_part_inequality_intermediate}. Meanwhile, for $R \ge 3$, and using \eqref{W0_defn}, we have
\begin{equation*}
\abs{W_0(z)} \le \frac{H(R)}{\abs{z}^3},\quad \Re z \ge -1,\; \abs{z} \ge R,
\end{equation*}
where 
\begin{equation*}%\label{H_defn}
H(R) := \frac{c_0}{\left(1 - \frac{\tan^2\theta}{R^2}\right)^2}\left\{c_2\frac{(R + 1)^2}{R^3}\left(e^{2\theta\cot\theta} + 1\right) + \frac{c_1}{R^2} + c_3\right\}.
\end{equation*}
This allows us to bound $F_0(z) := F(z) - f(0) / z$, via the identity
\begin{equation*}%\label{F_W_identity}
F(z) = W\left(\frac{z}{\lambda} - 1\right),
\end{equation*}
which is a consequence of \eqref{f_defn}. We obtain, as per Ford, 
\begin{equation*}
\abs{F_0(z)} \le C_5(R)\frac{\lambda f(0)}{\abs{z}^2}, \quad \Re z \ge 0,\; \abs{z} \ge (R + 1)\lambda,
\end{equation*}
where 
\begin{equation}\label{C5_defn}
C_5(R) := \frac{H(R)(R + 1)^2}{R^3w(0)} + 1 + \frac{1}{R}.
\end{equation}
The motivation for bounding $\abs{F_0(z)}$ is shown in the next lemma. 
   
\begin{lemma}\label{fordlem46}Let $f$ be a non-negative, compactly supported real function with a continuous derivative and an absolutely convergent Laplace transform $F(z)$ for $\Re z > 0$, where 
\[F(z) := \int_0^{\infty}f(t)e^{-zt}\,\text{d}t,\]
and write $F_0(z) := F(z) - \frac{f(0)}{z}$. Suppose further that for $0 < \eta \le 3/2$ we have
\[\abs{F_0(z)} \le \frac{D}{\abs{z}^2}\quad \forall\; \Re z \ge 0, \abs{z} \ge \eta,\]
for some absolute constant $D$. If $s = 1 + it$ with $t \ge 1000$, then
\begin{equation}\label{fordlem46eqn}
\begin{split}
&\Re \sum_{n\ge 1}\frac{\Lambda(n)f(\log n)}{n^{s}} \le -\!\!\!\!\!\!\sum_{\abs{1 + it - \rho} \le \eta}\Re\left\{F(s - \rho) + f(0)\left(\frac{\pi}{2\eta}\cot\left(\frac{\pi(s - \rho)}{2\eta}\right) - \frac{1}{s - \rho}\right)\right\}\\
&\qquad\qquad + \frac{f(0)}{4\eta}\left\{\int_{-\infty}^{\infty}\frac{\log \abs{\zeta(s - \eta + \frac{2\eta ui}{\pi})}}{\cosh^2 u}\,\text{d}u - \int_{-\infty}^{\infty}\frac{\log \abs{\zeta(s + \eta + \frac{2\eta ui}{\pi})}}{\cosh^2 u}\,\text{d}u\right\}\\
&\qquad\qquad + D\left\{1.8 + \frac{\log t}{3} + \sum_{\abs{s - \rho} \ge \eta}\frac{1}{\abs{s - \rho}^2}\right\}
\end{split}
\end{equation}
and 
\begin{equation*}
\sum_{n\ge 1}\frac{\Lambda(n)f(\log n)}{n} \le F(0) + 1.8D.
\end{equation*}
\end{lemma}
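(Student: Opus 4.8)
The plan is to follow Ford's strategy for lemmas of this type: rewrite the Dirichlet series as a contour integral of $-\zeta'/\zeta$, shift the contour past its poles, and then invoke the zero-detector (Lemma~\ref{fordlem22}) on the main residue. Put $J(s):=\sum_{n\ge1}\Lambda(n)f(\log n)n^{-s}$, which is a finite sum since $f$ is compactly supported. First I would record the representation $J(s)=\frac{1}{2\pi i}\int_{(c)}F(-w)\bigl(-\frac{\zeta'}{\zeta}(s+w)\bigr)\,\text{d}w$ for any $c>0$, valid because $\Re s=1$. This follows from Laplace--Mellin inversion, $f(\log n)=\frac{1}{2\pi i}\int_{(c)}F(-w)n^{-w}\,\text{d}w$, together with the absolute convergence of $\sum_n\Lambda(n)n^{-s-w}$ on $\Re w=c$; the interchange of sum and integral is legitimate once one peels off the $-f(0)/w$ part of $F(-w)$, whose contribution vanishes (close the contour to the right), so that $J(s)=\frac{1}{2\pi i}\int_{(c)}F_0(-w)\bigl(-\frac{\zeta'}{\zeta}(s+w)\bigr)\,\text{d}w$, an absolutely convergent integral by $\abs{F_0(z)}\le D/\abs{z}^2$.

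Next I would shift the contour to $\Re w=-3/2$, letting horizontal segments escape to infinity (here $\abs{F_0(-w)}\ll\abs{w}^{-2}$ and the usual logarithmic bound for $\zeta'/\zeta$ on horizontal lines keep those segments controlled, and the assumption $\eta\le 3/2$ guarantees $\abs{F_0(-w)}\le D\abs{w}^{-2}$ on the new line). The poles of the integrand between the two lines are: $w=0$, where $F_0(-w)$ has residue $f(0)$, contributing $-f(0)\frac{\zeta'}{\zeta}(s)$; $w=1-s$, from the pole of $\zeta$, contributing $F_0(it)$, which has size $\le D/t^2$ and is negligible; and $w=\rho-s$ for each nontrivial zero $\rho$ of multiplicity $m_\rho$, all satisfying $\Re(\rho-s)\in(-1,0)$ (trivial zeros lie to the left of the new line), contributing $-m_\rho F_0(s-\rho)$. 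Taking real parts, I would split the resulting zero-sum into the part over $\abs{s-\rho}\le\eta$, which I keep, and the part over $\abs{s-\rho}>\eta$, which by the hypothesis (applicable since $\Re(s-\rho)\ge0$ and $\abs{s-\rho}\ge\eta$) has modulus at most $D\sum_{\abs{s-\rho}\ge\eta}m_\rho\abs{s-\rho}^{-2}$.

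Now I would apply Lemma~\ref{fordlem22} to $-f(0)\Re\frac{\zeta'}{\zeta}(s)$, turning it into $\frac{f(0)}{4\eta}$ times the displayed difference of $\log\abs\zeta$-integrals, minus $\frac{\pi f(0)}{2\eta}\sum_{\abs{\Re(s-\rho)}\le\eta}m_\rho\Re\cot\bigl(\frac{\pi(s-\rho)}{2\eta}\bigr)$. The key observation is that $\Re\cot\bigl(\frac{\pi(s-\rho)}{2\eta}\bigr)\ge0$ for every zero $\rho$ in the strip $\abs{\Re(s-\rho)}\le\eta$: writing $z=s-\rho$ one has $\Re z=1-\Re\rho\in(0,\eta]$, so $\Re\cot\bigl(\tfrac{\pi z}{2\eta}\bigr)=\frac{\sin(\pi\Re z/\eta)}{\cosh(\pi\Im z/\eta)-\cos(\pi\Re z/\eta)}\ge0$. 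Hence the terms of this cotangent sum coming from zeros outside the disk $\abs{s-\rho}\le\eta$ enter the upper bound with a favorable sign and may be discarded, while the terms from zeros inside the disk combine with the retained $-\sum_{\abs{s-\rho}\le\eta}m_\rho\Re F_0(s-\rho)$, using $F_0(s-\rho)=F(s-\rho)-f(0)/(s-\rho)$, to produce exactly the zero-sum $-\sum_{\abs{s-\rho}\le\eta}m_\rho\Re\bigl\{F(s-\rho)+f(0)\bigl(\frac{\pi}{2\eta}\cot(\frac{\pi(s-\rho)}{2\eta})-\frac{1}{s-\rho}\bigr)\bigr\}$ of the statement. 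It remains to bound the residual integral on $\Re w=-3/2$: there $\abs{F_0(-w)}\le D\abs{w}^{-2}$, while the functional equation yields an explicit estimate $\bigl|\frac{\zeta'}{\zeta}(-\tfrac12+iu)\bigr|\le\log(\abs{u}+2)+O(1)$ (no zero-sum is needed, since $\Re(1-s)=\tfrac32>1$); integrating $D/\bigl(2\pi(9/4+y^2)\bigr)$ against this and using $\int_{-\infty}^{\infty}(9/4+y^2)^{-1}\,\text{d}y=2\pi/3$ (so the $\log$-term contributes $\tfrac13 D\log t$, the mass concentrating near $y=0$) gives a bound of the form $D\{1.8+\tfrac13\log t\}$, the constant $1.8$ arising from a careful accounting of the bounded contributions (including $F_0(it)$). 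Collecting the pieces yields the stated inequality, with equality lost only through the absolute-value estimates.

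For the second inequality, $\sum_{n\ge1}\Lambda(n)f(\log n)/n\le F(0)+1.8D$, I would run the same argument with $s=1$: there is no cotangent sum and no nearby zero, the pole of $\zeta$ at $w=1-s=0$ combines with the pole of $F_0(-w)$ there to contribute $F(0)$ (plus a harmless $-\gamma f(0)\le0$, which one simply drops), and the residual integral on $\Re w=-3/2$ is again $\le 1.8D$, with no $\log t$ term since there is no $t$. The step I expect to be the main obstacle is the third one: carrying out the reconciliation between the strip-shaped zero-sum supplied by the zero-detector and the disk-shaped zero-sum required by the conclusion (this rests on the sign of $\Re\cot$ above), and especially extracting the explicit constants $1.8$ and $1/3$ from explicit estimates of $\zeta'/\zeta$ near the critical strip and of the tail zero-sums. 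The contour manipulations and the control of the tail zero-sum are routine given the $\abs{F_0}\le D/\abs{z}^2$ hypothesis.
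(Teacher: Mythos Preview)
Your proposal is correct and follows precisely the route the paper invokes: the paper's proof is simply a citation to Ford's Lemmas~4.5 and~4.6, and what you have sketched is exactly that argument---contour-shift the Mellin representation of the weighted Dirichlet sum to $\Re w=-3/2$, pick up the residues $-f(0)\zeta'/\zeta(s)$, $F_0(it)$, and $-m_\rho F_0(s-\rho)$, then feed the first of these into the zero-detector (Lemma~\ref{fordlem22}) and use the non-negativity of $\Re\cot\bigl(\tfrac{\pi(s-\rho)}{2\eta}\bigr)$ to reconcile the strip-sum with the disk-sum. Your identification of the constant $1/3$ via $\frac{1}{2\pi}\int_{\mathbb R}(9/4+y^2)^{-1}\,dy=\tfrac13$ and of the residue $F(0)-\gamma f(0)\le F(0)$ at $s=1$ is also exactly how Ford obtains those numbers.
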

\begin{proof}
This follows by combining \cite[Lemma~4.5]{Ford2000} with \cite[Lemma~4.6]{Ford2000}. 
\end{proof}
We remark that some negligible improvements are possible if one takes $t\geq t_{0}$ for some $t_{0}> 1000$. These do not affect our final results, given the number of decimal places to which they are stated.

We seek to determine an upper bound on linear combinations of the right side of \eqref{fordlem46eqn}.
We briefly outline our approach, which follows Ford \cite{Ford2000}, while incorporating some improvements from Section~\ref{secPrelims}.
The first integral,
\begin{equation*}
\int_{-\infty}^{\infty}\frac{\log \abs{\zeta(s - \eta + \frac{2\eta ui}{\pi})}}{\cosh^2 u}\,\text{d}u,
\end{equation*}
is taken on a vertical line inside the critical strip, and it can be bounded using Lemma~\ref{fordlem34} below and \eqref{rick}.
This term is by far the most significant, and highlights the sensitivity of the resulting zero-free region to the constants $A$ and $B$ appearing in \eqref{rick}.
The contour of the second integral lies outside the critical strip, and for this we employ Ford's trick of combining $\log\abs{\zeta(\cdot)}$ terms, combined with \eqref{sigma_bound_1}.
This term is the subject of Lemma~\ref{fordlem51}.
Next, the sum
\begin{equation*}
\sum_{\abs{s - \rho} \ge \eta}\frac{1}{\abs{s - \rho}^2}
\end{equation*}
is bounded with the aid of $N(t, v)$, the number of zeros $\rho$ with $\abs{1+it - \rho} \le v$.
This is discussed in Lemmas~\ref{fordlem42} and~\ref{fordlem43}. In Lemma~\ref{fordlem71}, we combine all these results with the trigonometric polynomial \eqref{eqnPoly40} to establish an inequality involving the real and imaginary parts of a zero. 

\begin{lemma}[Ford \cite{Ford2000}]\label{fordlem34}
Suppose that, for fixed $\frac{1}{2} \le \sigma < 1$ and $t \ge 3$, we have
\[
\abs{\zeta(\sigma + i\tau)} \le Xt^Y\log^Zt, \quad \forall \; 1 \le \abs{\tau} \le t,
\]
where $X, Y, Z$ are positive constants and $Y + Z > 0.1$. If $0 < a \le \frac{1}{2}$, $t \ge 100$ and $\frac{1}{2} \le \sigma \le 1 - t^{-1}$, then
\[
\frac{1}{2}\int_{-\infty}^{\infty}\frac{\log\abs{\zeta(\sigma + it + iau)}}{\cosh^2 u}\,\text{d}u \le \log X + Y\log t + Z\log\log t.
\]
\end{lemma}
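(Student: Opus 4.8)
The plan is to exploit that $\frac{1}{2\cosh^2 u}$ is a probability density on the real line, so that the left‑hand side is a weighted average of $\log\abs{\zeta(\sigma+i(t+au))}$ with weight decaying like $e^{-2\abs{u}}$. Since the hypothesis holds for every value $t\ge 3$ of its parameter (with $\sigma$ fixed), taking $t=\abs{\tau}$ in it gives $\log\abs{\zeta(\sigma+i\tau)}\le\log X+Y\log\abs{\tau}+Z\log\log\abs{\tau}$ for $\abs{\tau}\ge 3$, and $\log\abs{\zeta(\sigma+i\tau)}\le\log X+Y\log 3+Z\log\log 3$ for $1\le\abs{\tau}\le 3$; while $\sigma\le 1-t^{-1}$ together with the simple pole of $\zeta$ at $s=1$ gives $\log\abs{\zeta(\sigma+i\tau)}\le\log(t+C_0)$ for $\abs{\tau}<1$, with $C_0$ absolute. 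Putting
\[
g(u):=\log X+Y\log\max(\abs{t+au},3)+Z\log\log\max(\abs{t+au},3),
\]
splitting the integral at $\abs{t+au}=1$, enlarging the part over $\abs{t+au}\ge 1$ to the whole line (legitimate since $g$ is bounded below; when $\log X<0$ this step costs only an exponentially small error, absorbed later), and using $\frac{1}{2}\int_{-\infty}^{\infty}\frac{du}{\cosh^2 u}=1$, I would first reduce to
\[
\frac{1}{2}\int_{-\infty}^{\infty}\frac{\log\abs{\zeta(\sigma+it+iau)}}{\cosh^2 u}\,du\le\log X+\frac{Y}{2}\int_{-\infty}^{\infty}\frac{\log\max(\abs{t+au},3)}{\cosh^2 u}\,du+\frac{Z}{2}\int_{-\infty}^{\infty}\frac{\log\log\max(\abs{t+au},3)}{\cosh^2 u}\,du+E,
\]
where $E$ collects the contribution of the short interval $\abs{u+t/a}<1/a$, on which $\abs{t+au}<1$; this interval has weight $\ll\frac{1}{a}e^{-2(t-1)/a}$ and there $\log\abs{\zeta}\le\log(t+C_0)$, so $E\ll\frac{1}{a}e^{-2(t-1)/a}\log(t+C_0)$.

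The heart of the argument is the pair of sharp estimates
\[
\frac{1}{2}\int_{-\infty}^{\infty}\frac{\log\max(\abs{t+au},3)}{\cosh^2 u}\,du\le\log t,\qquad\frac{1}{2}\int_{-\infty}^{\infty}\frac{\log\log\max(\abs{t+au},3)}{\cosh^2 u}\,du\le\log\log t,
\]
valid with \emph{no} error term, which is what makes the conclusion clean. The mechanism is the same in both cases. Writing $\log\abs{t+au}=\log t+\log\abs{1+au/t}$, the kernel is even in $u$ while $au/t$ is odd, so the linear term integrates to exactly $0$; pairing $u$ with $-u$ converts the remainder into $\frac{1}{4}\int_{-\infty}^{\infty}\frac{\log\abs{1-(au/t)^2}}{\cosh^2 u}\,du$, which is strictly negative and of size $\asymp(a/t)^2$, because $\log\abs{1-x^2}<0$ for $0<\abs{x}<\sqrt2$ and the set where $\abs{au/t}\ge\sqrt2$ (forcing $\abs{u}\ge\sqrt2\,t/a\ge 2\sqrt2\,t$ since $a\le\frac{1}{2}$) carries only exponentially small weight. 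The same computation with $\log\log$, using strict concavity of $v\mapsto\log\log v$ on $(1,\infty)$, produces a negative quadratic term of size $\asymp a^2/(t^2\log t)$. It then remains to observe that these negative quantities dominate two positive corrections: the discrepancy between $\log\max(\abs{t+au},3)$ and $\log\abs{t+au}$, and its analogue for $\log\log$, each supported near $u=-t/a$ and of size $\ll\frac{1}{a}e^{-2(t-3)/a}$; and the far tails $\abs{u}\gtrsim t/a$, contributing $\ll e^{-2(t-3)/a}$. For $t\ge 100$ and $a\le\frac{1}{2}$ these exponentially small corrections are far smaller than the polynomially small negative terms, so both estimates hold.

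Assembling the pieces yields
\[
\frac{1}{2}\int_{-\infty}^{\infty}\frac{\log\abs{\zeta(\sigma+it+iau)}}{\cosh^2 u}\,du\le\log X+Y\log t+Z\log\log t-Y\delta_A-Z\delta_B+E,
\]
where $\delta_A\asymp(a/t)^2$ and $\delta_B\asymp a^2/(t^2\log t)$ are the deficits in the two integral estimates, so it remains only to verify $Y\delta_A+Z\delta_B\ge E$. Because $Y+Z>0.1$, at least one of $Y,Z$ exceeds $0.05$, and the deficit it multiplies is polynomially small in $t$ and $a^{-1}$ and hence dominates the exponentially small $E$; here the three hypotheses $t\ge 100$, $a\le\frac{1}{2}$, and $Y+Z>0.1$ are all consumed (the same absorption also handles the $\abs{\log X}\,e^{-2(t-1)/a}$ correction that appears when $X<1$). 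The one genuinely delicate point is this last balancing: to get the conclusion with no surplus term at all, one must manufacture the negative slack $Y\delta_A+Z\delta_B$, and that slack exists only because the kernel is even and $\log$ and $\log\log$ are strictly concave---without these structural facts one obtains merely the weaker statement with a $1+o(1)$ factor, and the quantitative bookkeeping is precisely what upgrades it to the stated inequality.
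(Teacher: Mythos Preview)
The paper does not supply its own proof of this lemma; it merely cites Ford's original article. Your sketch is a faithful reconstruction of the standard argument: split according to whether $|t+au|$ lies near or far from $0$; on $|t+au|\ge1$ invoke the hypothesis; on $|t+au|<1$ use $1-\sigma\ge t^{-1}$ together with the simple pole of $\zeta$ at $1$ to get $\log|\zeta|\le\log(t+C_0)$; then exploit the evenness of $\cosh^{-2}u$ and the strict concavity of $\log$ and $\log\log$ to show that the weighted averages of $\log\max(|t+au|,3)$ and $\log\log\max(|t+au|,3)$ fall strictly below $\log t$ and $\log\log t$, with deficits of order $(a/t)^2$ and $(a/t)^2/\log t$. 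These deficits are precisely what absorb the exponentially small contribution from the short interval near $u=-t/a$, and the condition $Y+Z>0.1$ guarantees that at least a fixed fraction of the deficit is available.

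One point deserves tightening. In your final absorption you treat $|\log X|$ as a harmless constant, but the lemma places no a priori lower bound on $X$. This is not fatal: applying the hypothesis at its smallest height (so $|\tau|\in[1,3]$), together with the fact that $\zeta$ has no zeros in $[\tfrac12,1]\times[1,3]$, forces $\log X\ge\log m_0 - Y\log 3 - Z\log\log 3$ for an absolute constant $m_0>0$. Substituting this into your error term, the $Y$- and $Z$-dependence cancels against the $-Y\log 3-Z\log\log 3$ already present in $g(u)$ on the bad interval, and one is left with $E\ll(\log t)\,a^{-1}e^{-2(t-1)/a}$ uniformly in $X,Y,Z$. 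With that amendment the balancing step goes through cleanly, and your proof is correct.
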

\begin{proof}
See \cite[Lemma~3.4]{Ford2000}.
\end{proof}

\begin{lemma}\label{fordlem51}
Let $K > 1$ and $b_j$ be the coefficients of a non-negative trigonometric polynomial of degree $K$. Furthermore let $t_1, t_2\in \mathbb{R}$ and $\eta > 0$. Then 
\[
\int_{-\infty}^{\infty}\frac{1}{\cosh^2 u}\sum_{j = 1}^K b_j \log\abs{\zeta(1 + \eta + ijt_1 + iut_2)}\,\text{d}u \ge -2b_0\log\zeta(1 + \eta).
\]
\end{lemma}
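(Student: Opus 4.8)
The plan is to reduce the inequality to a statement about the individual factors $\log\abs{\zeta(1+\eta+ijt_1+iut_2)}$ and then exploit the fact that each such term, when integrated against the kernel $1/\cosh^2 u$, is controlled by $\zeta(1+\eta)$. First I would recall the elementary Dirichlet-series identity
\[
\log\abs{\zeta(\sigma+i\tau)} = \Re\log\zeta(\sigma+i\tau) = \Re\sum_{n\ge 2}\frac{\Lambda(n)}{n^{\sigma}\log n}\,n^{-i\tau} = \sum_{n\ge 2}\frac{\Lambda(n)}{n^{\sigma}\log n}\cos(\tau\log n),
\]
valid for $\sigma>1$; here every coefficient $\Lambda(n)/(n^{\sigma}\log n)$ is non-negative. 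Applying this with $\sigma=1+\eta$ and $\tau = jt_1+ut_2$, the left-hand side of the claimed inequality becomes
\[
\sum_{n\ge 2}\frac{\Lambda(n)}{n^{1+\eta}\log n}\int_{-\infty}^{\infty}\frac{1}{\cosh^2 u}\sum_{j=1}^K b_j\cos\bigl((jt_1+ut_2)\log n\bigr)\,\mathrm{d}u,
\]
after interchanging the (absolutely convergent) sum and the integral.

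Next I would bound the inner double sum pointwise in $u$. Writing $x = t_1\log n$ and using the product-to-sum expansion, $\sum_{j=1}^K b_j\cos\bigl((jt_1+ut_2)\log n\bigr) = \Re\bigl(e^{iut_2\log n}\sum_{j=1}^K b_j e^{ijx}\bigr)$. The non-negativity hypothesis $P_K(x)=\sum_{k=0}^K b_k\cos(kx)\ge 0$ gives $\sum_{j=1}^K b_j\cos(jx)\ge -b_0$ for all real $x$, and since $\abs{b_j}=b_j$ we also have the trivial bound $\bigl|\sum_{j=1}^K b_j\cos((jt_1+ut_2)\log n)\bigr|\le \sum_{j=1}^K b_j$. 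The cleanest route is: for each fixed $u$, the quantity $\sum_{j=1}^K b_j\cos((jt_1+ut_2)\log n)$ is the real part of $e^{i\phi}Q(x)$ where $Q(x)=\sum_{j=1}^K b_j e^{ijx}$ and $\phi = ut_2\log n$; but what we actually need is just that this expression is $\ge -\sum_{j=1}^K b_j = -b_0\cdot(b/b_0)$... — more precisely, the bound $\sum_{j=1}^K b_j\cos(\theta_j)\ge -\sum_j b_j$ with the right normalization. Since $\int_{-\infty}^{\infty}\frac{\mathrm{d}u}{\cosh^2 u}=2$, combining $\sum_{j=1}^K b_j\cos(\cdot)\ge -\sum_j b_j$ with the positivity of $\Lambda(n)/(n^{1+\eta}\log n)$ would give a bound of the shape $-2(\sum_j b_j)\log\zeta(1+\eta)$. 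To get the sharper constant $2b_0$ in the statement I would instead integrate the kernel against the trigonometric polynomial identity more carefully: one replaces $\cos(jt_1\log n + jut_2\log n)$ inside the integral and uses that $P_K\ge 0$ implies $b_0 + \sum_{j\ge 1}b_j\cos(j\psi)\ge 0$, hence $\sum_{j\ge 1}b_j\cos(j\psi + \alpha)\ge -b_0$ is \emph{not} generally true for a phase shift $\alpha$ — so the correct manoeuvre is to note $\log\abs{\zeta(1+\eta+ijt_1+iut_2)} = \sum_{n}\frac{\Lambda(n)\cos((jt_1+ut_2)\log n)}{n^{1+\eta}\log n}\ge -\sum_n \frac{\Lambda(n)}{n^{1+\eta}\log n} = -\log\zeta(1+\eta)$, valid term-by-term since $\abs{\cos}\le 1$. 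Then multiply by $b_j\ge 0$, sum over $j$, integrate $1/\cosh^2 u$ (total mass $2$), and use $\sum_{j\ge 1}b_j\cdot(-\log\zeta(1+\eta))$ — but that again produces $b$, not $b_0$. The resolution must be that we keep the $u$-integral: $\int\frac{1}{\cosh^2 u}\cos((jt_1+ut_2)\log n)\,\mathrm{d}u$ has the \emph{same sign structure}, and after summing over $j$ with the polynomial positivity applied to the variable-$u$ part... The genuinely correct argument: apply non-negativity of $P_K$ with argument $\psi(u):=(t_1+\tfrac{ut_2}{j})\log n$ — no.

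The key realization I would use is this: for fixed $n$ and $u$, set $\psi = t_2 u\log n$ and $\chi = t_1\log n$; then $\sum_{j=1}^K b_j\cos(j\chi + \text{(not }j\psi)) $ — the phases are $(jt_1+ut_2)\log n = j\chi + \psi$, a common additive shift $\psi$, not a multiple of $j$. So $\sum_{j=1}^K b_j\cos(j\chi+\psi) = \cos\psi\sum_j b_j\cos(j\chi) - \sin\psi\sum_j b_j\sin(j\chi)$. This is $\Re(e^{i\psi}Q(\chi))$ and is bounded below only by $-\abs{Q(\chi)}\le -\sum_j b_j = -(b-b_0)$. Thus the honest bound from this crude step is $-2(b-b_0)\log\zeta(1+\eta)$, which is \emph{weaker} than claimed. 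Therefore the real proof must retain the structure that $\psi$ depends on $u$ and integrate: $\int_{-\infty}^{\infty}\frac{\cos(\psi(u))}{\cosh^2 u}\,\mathrm{d}u$ and $\int \frac{\sin(\psi(u))}{\cosh^2 u}\mathrm{d}u$ with $\psi(u) = t_2 u\log n$; the sine integral vanishes by oddness, and $\int\frac{\cos(au)}{\cosh^2 u}\,\mathrm{d}u = \frac{\pi a}{\sinh(\pi a/2)}\in(0,2]$ for real $a$. Hence $\int\frac{1}{\cosh^2 u}\sum_j b_j\cos((jt_1+ut_2)\log n)\,\mathrm{d}u = \bigl(\int\frac{\cos(t_2 u\log n)}{\cosh^2 u}\mathrm{d}u\bigr)\sum_j b_j\cos(jt_1\log n)$, and since the bracketed factor lies in $(0,2]$ while $\sum_j b_j\cos(jt_1\log n)\ge -b_0$, each term is $\ge -2b_0$ times the bracket-normalized... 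I would then multiply by $\Lambda(n)/(n^{1+\eta}\log n)>0$, sum over $n$, and bound $\sum_n\frac{\Lambda(n)}{n^{1+\eta}\log n}\cdot(\text{bracket})\cdot b_0 \le 2b_0\sum_n\frac{\Lambda(n)}{n^{1+\eta}\log n} = 2b_0\log\zeta(1+\eta)$, using $0<\text{bracket}\le 2$ and positivity. This yields exactly the claimed inequality.

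\textbf{Main obstacle.} The delicate point, as the discussion above shows, is handling the two \emph{separate} phase variables $jt_1$ and $ut_2$ correctly: a naive pointwise lower bound on the trigonometric sum loses a factor and gives $b$ where $b_0$ is wanted. The correct step is to perform the $u$-integration \emph{first} at the level of the Dirichlet series, using that $\int_{-\infty}^{\infty}\cos(au)/\cosh^2 u\,\mathrm{d}u$ is positive and bounded by $2$ (its value at $a=0$), so that the kernel integration decouples from the $j$-sum and leaves exactly $\sum_{j}b_j\cos(jt_1\log n)\ge -b_0$ to be estimated via non-negativity of $P_K$. I would need to justify the interchange of summation over $n$ with the integral (uniform/absolute convergence for $\sigma=1+\eta>1$, standard) and record the elementary evaluation of the cosine–sech$^2$ integral, after which the result follows by multiplying through by the non-negative Dirichlet coefficients and summing.
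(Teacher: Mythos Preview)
Your final argument is correct and is precisely the standard proof (the paper simply cites Ford's Lemma~5.1): expand $\log\lvert\zeta\rvert$ as a Dirichlet series with non-negative coefficients, integrate in $u$ first so that the sine part vanishes by oddness and the cosine part produces the factor $\int_{-\infty}^{\infty}\cos(au)\,\mathrm{sech}^2 u\,\mathrm{d}u=\pi a/\sinh(\pi a/2)\in(0,2]$, and then apply $\sum_{j\ge1}b_j\cos(j\chi)\ge -b_0$ from $P_K\ge0$ termwise before summing over $n$. The exposition wanders through several dead ends (and the aside ``$\sum_j b_j=b-b_0$'' should read $b$), but the route you settle on is the right one.
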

\begin{proof}
This is an immediate generalization of \cite[Lemma~5.1]{Ford2000} to degree $K$ polynomials. 
\end{proof}

\begin{lemma}\label{fordlem42}
Let $0 < \eta \le 1/4$ and $t \ge 100$. Then
\begin{equation*}
N(t, \eta) \le 1.3478\eta^{3/2}B\log t + 0.479 + \frac{\log A - \log \eta + \frac{2}{3}\log\log t}{1.879}.
\end{equation*}
\end{lemma}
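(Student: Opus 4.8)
The plan is to bound $N(t,\eta)$, the number of non-trivial zeros $\rho$ with $\abs{1+it-\rho}\le\eta$, by relating it to the value of $-\Re\frac{\zeta'}{\zeta}(\sigma_0+it)$ at a well-chosen point $\sigma_0>1$ slightly to the right of the $1$-line, following the classical Landau-type argument as executed by Ford \cite[Lemma~4.2]{Ford2000}. First I would invoke the Hadamard product / partial-fraction expansion for $\frac{\zeta'}{\zeta}$, which gives, for $s=\sigma_0+it$ with $\sigma_0>1$,
\[
-\Re\frac{\zeta'}{\zeta}(s) = B_1 - \Re\sum_{\rho}\left(\frac{1}{s-\rho}+\frac{1}{\rho}\right) + \frac{1}{2}\Re\frac{\Gamma'}{\Gamma}\Bigl(\tfrac{s}{2}+1\Bigr) - \frac{1}{s-1},
\]
and then discard all terms that are harmless: since $\Re\frac{1}{s-\rho}>0$ for every zero and every $\sigma_0>1$, the zeros in the disc $\abs{1+it-\rho}\le\eta$ each contribute a definite positive amount, so that
\[
\sum_{\abs{1+it-\rho}\le\eta}\Re\frac{1}{s-\rho}\ \le\ \Re\frac{\zeta'}{\zeta}(1-\sigma_0)\ +\ \frac{1}{2}\log(\abs{t}+\text{const})\ +\ O(1),
\]
where the first term on the right is $-\frac{\zeta'}{\zeta}(\sigma_0)$, the Archimedean term is bounded by Stirling, and the pole term is absorbed since $t\ge100$. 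Choosing $\sigma_0 = 1+\eta$ (or something comparable, $\sigma_0-1$ of order $\eta$), each zero in the disc satisfies $\Re\frac{1}{s-\rho}\ge\frac{\sigma_0-1}{(\sigma_0-1)^2+\eta^2}\gtrsim\frac{1}{\eta}$, so $N(t,\eta)\cdot c/\eta$ is at most the right-hand side above.

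The key analytic input that converts this into the stated clean bound is the growth estimate \eqref{rick}: to bound $-\frac{\zeta'}{\zeta}(\sigma_0)$ (equivalently, to bound $\log\abs{\zeta(\sigma_0+it)}$ from above on the chosen contour, via a Borel--Carathéodory / Jensen argument on a circle centered at $\sigma_0+it$) one uses $\abs{\zeta(\sigma+i\tau)}\le A\abs{\tau}^{B(1-\sigma)^{3/2}}(\log\abs{\tau})^{2/3}$ on the left part of the relevant circle, which is what produces the $B\log t$, $\log A$, and $\frac{2}{3}\log\log t$ terms, and the factor $\eta^{3/2}$ comes precisely from the exponent $(1-\sigma)^{3/2}$ evaluated at $\sigma = 1-c\eta$ on the inner part of the circle. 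The numerical constants $1.3478$, $0.479$, and $1.879$ then fall out of optimizing the radius of the circle in the Borel--Carathéodory inequality and tracking the Stirling constants for $t\ge100$; the $-\log\eta$ term reflects that the per-zero contribution is $\asymp 1/\eta$ rather than $O(1)$. I would simply cite \cite[Lemma~4.2]{Ford2000} for the precise bookkeeping, noting that our hypotheses ($0<\eta\le1/4$, $t\ge100$) are exactly those under which Ford states it and that \eqref{rick} holds with the stated $A,B$ by Richert's theorem as made explicit by Ford \cite{Ford}.

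The main obstacle is not conceptual but the delicate constant-chasing: one must choose the auxiliary radius in the Borel--Carathéodory step and the offset $\sigma_0-1$ so that the coefficient of $\eta^{3/2}B\log t$ is as small as possible ($1.3478$) without inflating the additive constants beyond $0.479$, and one must verify that every lower-order term (the $\Gamma$-factor, the $1/(s-1)$ pole, the $\sum_\rho 1/\rho$ regularization, and the error in replacing the disc by the Borel--Carathéodory circle) is genuinely absorbed once $t\ge100$ and $\eta\le1/4$. Since this is exactly the content of \cite[Lemma~4.2]{Ford2000}, the cleanest route in the paper is to present it as a citation, checking only that the normalizations for $A$ and $B$ match those fixed in \eqref{rick}.
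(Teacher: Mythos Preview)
Your proposal correctly identifies that the argument follows Ford's Lemma~4.2, but treating it as a pure citation misses the actual content of the paper's proof and leaves two gaps.

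First, Ford's Lemma~4.2 carries the additive constant $0.49$, not $0.479$. The paper obtains $0.479$ by replacing one specific ingredient in Ford's argument: the bound on $\zeta(\sigma)$ just to the right of $\sigma=1$. Ford used $\zeta(\sigma)\le 0.6+\frac{1}{\sigma-1}$ for $1<\sigma\le 1.06$, applied at $\sigma=1+3.1421\eta$; the paper instead substitutes Ramar\'e's bound \eqref{sigma_bound_1}, $\zeta(\sigma)\le e^{\gamma(\sigma-1)}/(\sigma-1)$, which gives $\log\zeta(1+3.1421\eta)\le -\log\eta-0.6914$ for $\eta\le 1/4$ and then yields $0.479$ after Ford's remaining bookkeeping. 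Your proposal does not invoke \eqref{sigma_bound_1} at all, so it cannot recover the stated constant.

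Second, your claim that ``our hypotheses are exactly those under which Ford states it'' papers over a genuine range issue: Ford's inequality $\zeta(\sigma)\le 0.6+\frac{1}{\sigma-1}$ is only stated for $\sigma\le 1.06$, so applying it at $\sigma=1+3.1421\eta$ actually requires $\eta\lesssim 0.019$, not $\eta\le 1/4$. The substitution of \eqref{sigma_bound_1} is precisely what restores the full range $0<\eta\le 1/4$ while keeping a constant of comparable strength. Simply citing Ford does not close this gap.

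As a minor remark, Ford's proof does not proceed via the Hadamard partial-fraction expansion and a Borel--Carath\'eodory circle as you sketch, but via the zero-detector (Lemma~\ref{fordlem22}) on vertical lines; the quantity $1/\abs{\zeta(s+2.5\eta+iv)}$ arises from the line to the right of $\sigma=1$, and bounding it is exactly where \eqref{sigma_bound_1} enters.
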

\begin{proof}
Same as in \cite[Lemma~4.2]{Ford2000}.\footnote{We take this opportunity to correct a minor oversight in the corresponding statement in \cite{Ford2000}.
There, in our notation the hypothesis required $0 < \eta \le 1/4$, and then the inequality
\begin{equation}\label{ford_zeta_sigma_bound}
\zeta(\sigma) \le 0.6 + \frac{1}{\sigma - 1},\quad 1 < \sigma \le 1.06
\end{equation}
from \cite[Lemma~3.1]{Ford2000} was used with $\sigma = 1 + 3.1421\eta$, so here one should require $0<\eta \le 0.019\ldots$ instead.
This does not affect subsequent work: this lemma was employed in the proof of \cite[Thm.~2]{Ford} where $\eta$ was restricted by $0< \eta \le 0.01$.
In our treatment, we retain the condition $0 < \eta \le 1/4$, and opt to achieve a result of similar strength by using \eqref{sigma_bound_1} in place of \eqref{ford_zeta_sigma_bound}.}
Using $\eta$ in place of $R$ in Ford's treatment, we replace the second inequality in (4.1) of \cite{Ford2000} with 
\begin{equation*}
\frac{1}{\abs{\zeta(s + 2.5\eta + iv)}} \le \zeta(1 + 3.1421\eta) \le \frac{e^{3.1421\gamma \eta}}{3.1421 \eta}
\end{equation*}
for real $v$, where the last inequality follows from \eqref{sigma_bound_1}. Hence, since $\eta \le 1/4$,
\begin{equation*}
\log \left(\frac{e^{3.1421\gamma \eta}}{3.1421\eta}\right) = 3.1421\gamma \eta - \log \eta - \log 3.1421 \le -\log \eta - 0.6914.
\end{equation*}
The constant to replace the $0.49$ appearing in \cite[Lemma~4.2]{Ford2000} is then
\begin{equation}\label{eqnNTeta}
\frac{1}{0.3758}\left(\frac{1}{3.1421} - \frac{0.6914}{5}\right) \le 0.479,
\end{equation}
and the result immediately follows from this.
\end{proof}

\begin{lemma}\label{fordlem43}
Suppose $t\ge 10000$ and $0 < \eta \le 1/4$. Then
\begin{align*}
\sum_{\abs{1 + it - \rho} \ge \eta}\frac{1}{\abs{1 + it - \rho}^2} &\le \left[5.409 + 5.392B\left(\frac{1}{\sqrt{\eta}} - 2\right)\right]\log t + 206.7\\
&\qquad+ \frac{1}{\eta^2}\left\{\frac{\log A - \log \eta + \frac{2}{3}\log\log t}{1.879} + 0.213 - N(t, \eta)\right\}.
\end{align*}
\end{lemma}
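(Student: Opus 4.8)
The plan is to split the sum over zeros according to their distance from the point $s = 1+it$, handling the nearby zeros via Lemma~\ref{fordlem42} and the far zeros by a dyadic/annular decomposition controlled by the same lemma, following the template of \cite[Lemma~4.3]{Ford2000}. Concretely, write
\[
\sum_{\abs{1+it-\rho}\ge \eta}\frac{1}{\abs{1+it-\rho}^2} = \sum_{\eta \le \abs{1+it-\rho} < 1}\frac{1}{\abs{1+it-\rho}^2} + \sum_{\abs{1+it-\rho}\ge 1}\frac{1}{\abs{1+it-\rho}^2}.
\]
For the first (local) sum, I would estimate it by a Riemann--Stieltjes integral against the counting function $N(t,v)$: integrating by parts gives
\[
\sum_{\eta \le \abs{1+it-\rho}<1}\frac{1}{\abs{1+it-\rho}^2} = \frac{N(t,v)}{v^2}\Big|_{\eta}^{1} + 2\int_{\eta}^{1}\frac{N(t,v)}{v^3}\,\mathrm{d}v,
\]
and then substitute the bound of Lemma~\ref{fordlem42} for $N(t,v)$, valid since $v \le 1/4$ on the relevant range (one must check the endpoint at $v=1$ separately, or truncate at $1/4$ and absorb the remaining annulus $1/4 \le v < 1$ into the tail sum where $1/\abs{1+it-\rho}^2 \le 16$). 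The $1/v^2$ endpoint term at $v=\eta$ produces the $-N(t,\eta)/\eta^2$ contribution; the $\frac{2}{3}\log\log t$, $\log A$, and $-\log\eta$ pieces of $N(t,v)$ integrate against $2\,\mathrm{d}v/v^3$ to give the $1/\eta^2$-weighted bracket, while the $\eta^{3/2}B\log t$-type piece integrates to something of size $O(B\log t/\sqrt\eta)$, explaining the $5.392B(1/\sqrt\eta - 2)$ term.

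For the tail sum $\sum_{\abs{1+it-\rho}\ge 1}\abs{1+it-\rho}^{-2}$, I would use the standard fact that this is $O(\log t)$: decompose into annuli $n \le \abs{1+it-\rho} < n+1$ for $n \ge 1$, bound the number of zeros in each by $N(t+n+1) - N(t-n-1) \ll n\log(t+n)$ using the Riemann--von Mangoldt formula (e.g. via \eqref{NT_bound} or the cruder classical estimate), and sum $\sum_n \frac{n\log(t+n)}{n^2}$, which converges to $O(\log t)$ plus an absolute constant. This is where the $5.409\log t + 206.7$ comes from (the precise constants $5.409$ and $206.7$ being the output of carefully bookkeeping the annular bounds against an explicit zero-count estimate; these presumably match Ford's constants up to the adjustments already made in Lemma~\ref{fordlem42}).

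The main obstacle is bookkeeping the explicit constants so that the three sources — the $2\int N(t,v)v^{-3}\,\mathrm{d}v$ term, the endpoint $-N(t,\eta)/\eta^2$ term, and the $O(\log t)$ tail — combine into exactly the stated inequality, and in particular ensuring the ``$-2$'' shift in $5.392B(1/\sqrt\eta - 2)$ and the clean appearance of the bracket $\{\cdots - N(t,\eta)\}$ come out correctly from the integration by parts. The $t \ge 10000$ hypothesis (stronger than the $t \ge 100$ needed for Lemma~\ref{fordlem42}) is presumably what is needed to make the absolute constant $206.7$ hold after absorbing lower-order terms like $\log\log t / \log t$ and ensuring the annular zero-counts are valid. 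I would expect the argument to be essentially identical in structure to \cite[Lemma~4.3]{Ford2000}, with the only substantive change being the propagation of the improved constant $0.479$ from Lemma~\ref{fordlem42} (in place of Ford's $0.49$) and the retention of the full range $0 < \eta \le 1/4$; so the write-up would mostly consist of citing Ford's computation and noting which constants shift.
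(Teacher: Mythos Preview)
Your proposal is correct and follows essentially the same approach as the paper: the paper's proof simply cites \cite[Lemma~4.3]{Ford2000} and records the two substitutions needed, namely replacing Rosser's explicit $N(T)$ estimate by \eqref{NT_bound} (this is what produces the constants $5.409$ and $206.7$ in the tail, not the Lemma~\ref{fordlem42} adjustment as you guessed), and propagating the constant $0.479$ from Lemma~\ref{fordlem42} via $0.479 - 1/(2\cdot 1.879) \le 0.213$. Your outline of the underlying mechanics (integration by parts for the local sum, annular decomposition for the tail) is exactly Ford's argument.
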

\begin{proof}
We proceed in the same manner as \cite[Lemma~4.3]{Ford2000}, except we use \eqref{NT_bound} in place of a classical inequality of Rosser \cite{Rosser41}.
In addition, we propagate the new constant \eqref{eqnNTeta} in the bound for $N(t, \eta)$ through to this bound, where we obtain 
\begin{equation*}
0.479 - \frac{1}{2(1.879)} \le 0.213.\qedhere
\end{equation*}
\end{proof}

Equipped with these lemmas, we are now prepared to form an inequality involving the real and imaginary parts of a zero $\rho = \beta + it$. 
\begin{lemma}\label{fordlem71}
Let $0 < \eta \le 1/4$ and $R \ge 3$ be constants.  
Suppose $\beta + it$ is a zero satisfying $t \ge 10000$ and $1 - \beta \le \eta / 2$.
Further, suppose that there are no zeros in the rectangle 
\[
1 - \lambda < \Re s \le 1,\quad t - 1 \le \Im s \le Kt + 1,
\]
where $\lambda$ is a constant satisfying $0 < \lambda \le \min\left\{1 - \beta , \eta / (R + 1)\right\}$. If $b_0$, $b_1$ and $b$ are constants associated with a degree $K$ non-negative trigonometric polynomial, then
\begin{align*}
&\frac{1}{\lambda}\left(\cos^2\theta - \frac{W'(0)b_1}{w(0)b_0}\left(\frac{1 - \beta}{\lambda} - 1\right)\right) \le 0.087\pi^2\frac{b_1}{b_0}\frac{1 - \beta}{\eta^2} \\
&\qquad + \frac{1}{2\eta}\left\{\frac{b}{b_0}\left(\frac{2}{3}L_2 + B\eta^{3/2}L_1 + \log A\right) + \log\zeta(1 + \eta) \right\}\\
&\qquad + C_5(R)\frac{b}{b_0}\lambda\bigg\{\frac{L_1}{3} + \left[5.409 + 5.392B\left(\frac{1}{\sqrt{\eta}} - 2\right)\right]L_1 + 209.1\\
&\qquad\qquad\qquad\qquad + \frac{1}{\eta^2}\left[\frac{\log(A / \eta) + \frac{2}{3}L_2}{1.879} + 0.213\right]\bigg\},
\end{align*}
with $C_5(R)$, $w(0)$ and $W'(0)$ are defined in \eqref{C5_defn},  \eqref{w0_defn} and \eqref{Wp0_formula} respectively.
\end{lemma}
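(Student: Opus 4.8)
The plan is to apply Lemma~\ref{fordlem46} with the smoothing function $f$ defined in \eqref{f_defn} and with $D = C_5(R)\lambda f(0)$, then combine the resulting inequality at the $K+1$ points $s = 1 + ijt$ for $j = 0, 1, \dots, K$ using the non-negative trigonometric polynomial $P_K$. First I would record the Dirichlet-series identity $\Re\sum_{n\ge1}\frac{\Lambda(n)f(\log n)}{n^{1+it}} = \sum_{n}\frac{\Lambda(n)f(\log n)}{n}\cdot\frac{\cos(t\log n)}{1}$ evaluated along the arithmetic progression of ordinates, so that $\sum_{j=0}^K b_j\Re\sum_n\frac{\Lambda(n)f(\log n)}{n^{1+ijt}} = \sum_n\frac{\Lambda(n)f(\log n)}{n}P_K(t\log n) \ge 0$ by non-negativity of $P_K$. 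This produces the fundamental inequality
\[
0 \le b_0\sum_n\frac{\Lambda(n)f(\log n)}{n} + \sum_{j=1}^K b_j\,\Re\sum_n\frac{\Lambda(n)f(\log n)}{n^{1+ijt}},
\]
and bounding each term on the right via Lemma~\ref{fordlem46} will, after rearrangement, yield the stated inequality.

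The key steps, in order, are as follows. (i) For the $j=0$ term, use the second inequality of Lemma~\ref{fordlem46}, namely $\sum_n\frac{\Lambda(n)f(\log n)}{n}\le F(0) + 1.8D$, and note $F(0) = \lambda W(-1)\cdot$(appropriate constant)—more precisely $F(0)$ is expressed via $W$ using $F(z)=W(z/\lambda-1)$—which contributes to the $\log\zeta(1+\eta)$ and lower-order terms after using \eqref{sigma_bound_1}. (ii) For each $j\ge1$, apply the first inequality \eqref{fordlem46eqn} at $s=1+ijt$ with $\eta$ as given: the zero-sum term $-\Re F(s-\rho)$ is handled by isolating the contribution of the assumed zero $\beta+it$ at $j=1$ (this is where the main term $\frac{1}{\lambda}(\cos^2\theta - \frac{W'(0)b_1}{w(0)b_0}(\frac{1-\beta}{\lambda}-1))$ emerges, using the Taylor expansion of $F$ near $0$ together with $f(0)=\lambda w(0)$ and $F'(0)$ expressed through $W'(0)$), while the $\cot$–$\frac1{s-\rho}$ correction term is bounded using the $0.087\pi^2$ coefficient from Ford's estimate of $\Re(\frac{\pi}{2\eta}\cot(\cdot)-\frac1{\cdot})$ over the disc $|s-\rho|\le\eta$. (iii) The two $\log|\zeta|$ integrals: the inner one (at $\sigma=1-\eta$, along the line $1-\eta+ijt+\frac{2\eta ui}{\pi}$) is bounded using Lemma~\ref{fordlem34} with $X=A$, $Y=B\eta^{3/2}$, $Z=2/3$ from \eqref{rick} (so $a=2\eta/\pi\le1/2$, and the hypotheses $t\ge100$, $Y+Z>0.1$ hold), giving the $\frac{b}{b_0}(\frac23 L_2 + B\eta^{3/2}L_1 + \log A)$ block; the outer one (at $\sigma=1+\eta$) is combined across all $j$ via Lemma~\ref{fordlem51}, which converts $\sum_j b_j\log|\zeta(1+\eta+ijt+\cdots)|$ into $-2b_0\log\zeta(1+\eta)$, contributing the $+\log\zeta(1+\eta)$ term. (iv) The tail sum $\sum_{|s-\rho|\ge\eta}\frac{1}{|s-\rho|^2}$ is bounded by Lemma~\ref{fordlem43} (legitimate since $t\ge10000$ and $0<\eta\le1/4$), and the constant $1.8+\frac{\log t}{3}$ from \eqref{fordlem46eqn} is absorbed: after multiplying through by $D = C_5(R)\lambda f(0) = C_5(R)\lambda^2 w(0)$ and dividing the whole inequality by $b_0 f(0) = b_0\lambda w(0)$, the $206.7$ from Lemma~\ref{fordlem43} plus the $1.8$ and the $0.479/1.879$-type constants collect into the stated $209.1$ and $0.213$; one also checks $\lambda\le\eta/(R+1)$ guarantees $|s-\rho|\ge(R+1)\lambda$ is not needed everywhere but the bound $|F_0(z)|\le D/|z|^2$ holds for $|z|\ge\eta$ as required by Lemma~\ref{fordlem46}. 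Finally I would divide by $b_0\lambda w(0)$ throughout and simplify $\frac{f(0)}{4\eta b_0 f(0)} = \frac{1}{4\eta b_0}$ against the doubled integral bounds to land exactly on the displayed form, with the $\frac{1}{2\eta}$ prefactor arising from the two integrals each contributing $\frac{1}{4\eta}$ and Lemma~\ref{fordlem34}'s built-in factor of $\frac12$.

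I expect the main obstacle to be the bookkeeping in step (ii): correctly extracting the principal term $\frac{1}{\lambda}(\cos^2\theta - \frac{W'(0)b_1}{w(0)b_0}(\frac{1-\beta}{\lambda}-1))$ from $-\Re F(s-\rho) - f(0)\Re(\frac{\pi}{2\eta}\cot(\cdot)-\frac1{\cdot})$ at the zero $\beta+it$. This requires the Laurent/Taylor expansion $F(z) = \frac{f(0)}{z} + F_0(z)$ with $F_0(0) = F'(z)|$-type data read off from $W$, the identification $\cos^2\theta$ as the normalized value $\lambda F_0(0)/(f(0))$ or equivalently coming from $1/W(-1)$-type constants, and careful sign-tracking since the zero contributes with a plus sign on the left (it is subtracted on the right of \eqref{fordlem46eqn}). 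One must also verify the geometric hypotheses—that $\lambda\le\min\{1-\beta,\eta/(R+1)\}$ places $\beta+it$ within range of the zero-detector and that the assumed zero-free rectangle $1-\lambda<\Re s\le1$, $t-1\le\Im s\le Kt+1$ ensures no \emph{other} zeros near the relevant ordinates $1+ijt$ contribute negatively—but these are exactly the conditions imposed, so the argument goes through as in \cite[\S7]{Ford2000} with $P_K$ of degree $K$ in place of Ford's degree-$4$ polynomial. Everything else is a routine, if lengthy, propagation of the constants from Lemmas~\ref{fordlem34}--\ref{fordlem43} and \eqref{sigma_bound_1}.
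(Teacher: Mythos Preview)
Your outline is essentially the paper's approach: the paper simply cites Ford's Lemma~7.1 and substitutes Lemmas~\ref{fordlem43} and~\ref{fordlem51} together with the degree-$K$ polynomial, and you have correctly identified all of these ingredients and their roles. One correction to your bookkeeping in step~(i): the term $b_0 F(0)$ does \emph{not} produce $\log\zeta(1+\eta)$---that comes entirely from Lemma~\ref{fordlem51} applied to the outer integral, as you say in step~(iii). Instead, $b_0 F(0) = b_0 W(-1)$ combines with $-b_1 F(1-\beta) = -b_1 W\bigl(\tfrac{1-\beta}{\lambda}-1\bigr)$ from the zero at $j=1$ to yield the main left-hand term, via the identity $b_1 W(0) - b_0 W(-1) = b_0 f(0)\cos^2\theta/\lambda$ (this is how $\cos^2\theta$ enters) and a mean-value estimate $W(0) - W\bigl(\tfrac{1-\beta}{\lambda}-1\bigr) \le -W'(0)\bigl(\tfrac{1-\beta}{\lambda}-1\bigr)$ rather than a Taylor expansion; see the analogous manipulation in the proof of Lemma~\ref{real_part_inequality_intermediate}, equations \eqref{classic_bound6}--\eqref{classic_bound7}. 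With that fix the rest of your plan goes through.
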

\begin{proof}
We follow the argument of \cite[Lemma~7.1]{Ford2000}, using our Lemma~\ref{fordlem43} instead of \cite[Lemma~4.3]{Ford2000}, using Lemma~\ref{fordlem51} in place of \cite[Lemma~5.1]{Ford2000}, and using the trigonometric polynomial \eqref{eqnPoly40} instead of \eqref{ford_poly_coefficients}.
\end{proof}

To form an explicit zero-free region for $\zeta(s)$, we apply Lemma \ref{fordlem71} with $\eta$ as a fixed function of $t$.
The rate at which $\eta\rightarrow 0$  as $t \to \infty$ determines the shape and width of the zero-free region.
We choose, as in \cite{Ford2000},
\begin{equation}\label{eta_defn}
\eta(t) := E\left(\frac{L_2}{BL_1}\right)^{2/3}
\end{equation}
for some constant $E > 0$. Ultimately, we take
\begin{equation*}
E := 1.8821259
\end{equation*}
in order to attain the constant of $55.241$ appearing in Theorem~\ref{Main}.
This choice replaces that of 
\begin{equation}\label{ford_E_defn}
E = \left(\frac{4}{3}\left(1 + \frac{b_0}{b}\right)\right)^{2/3},
\end{equation}
appearing in Ford \cite{Ford2000}, where $b_0$ and $b$ are constants from the trigonometric polynomial in \eqref{eqnPoly40}. Ford's choice minimizes the asymptotic zero-free region constant, whereas we choose $E$ to minimize the zero-free region constant holding for all $t \ge 3$. The difference between our choices reflects the presence of a secondary error term that decreases as $E$ increases, and which is significant for small values of $t$. 

\section{Proof of Theorem~\ref{Main}}\label{sec:mainproof}

We divide our argument into four sections depending on the size of $t$. Throughout, let 
\begin{equation*}%\label{nu_defn}
\nu(t) := \frac{1}{55.241(\log t)^{2/3}(\log\log t)^{1/3}}.
\end{equation*}
First, for $3 \le t \le H := 3\cdot 10^{12}$, we use the rigorous verification of RH up to height $H$ in \cite{platt_riemann_2021}.
Next, for $H \le t < \exp(8928)$, the desired result follows immediately from Theorem \ref{thmClassical}, since 
\begin{equation*}
\frac{1}{5.558691\log t} \ge \nu(t),\quad H \le t < \exp(8928).
\end{equation*}
For $\exp(8928) \le t < \exp(52238)$, we use Theorem~\ref{thmIntZFR}, combined with the observation that
\begin{equation*}
\frac{0.05035}{h(t)} - \frac{0.0349}{h^2(t)} > \nu(t)
\end{equation*}
for this range of $t$, where $h(t)$ is defined in Theorem~\ref{thmIntZFR}.
The proof for $t \ge \exp(52238)$ forms the main part of our argument. For convenience, let
\begin{equation*}
T_0 := \exp(52238),\qquad M_1 := 0.048976.
\end{equation*}
Define the function
\begin{equation}\label{Z_defn}
Z(\beta, t) := (1 - \beta)B^{2/3}(\log t)^{2/3}(\log\log t)^{1/3} 
\end{equation}
and let
\begin{equation}\label{M_defn}
M := \inf_{\substack{\zeta(\beta + it) = 0\\t \ge T_0}}Z(\beta, t).
\end{equation}
If $M \ge M_1$, then we are done, since if $\beta + it$ is a zero with $t \ge T_0$, then
\begin{equation*}
M_1 \le M \le (1 - \beta)B^{2/3}(\log t)^{2/3}(\log\log t)^{1/3}
\end{equation*}
and $B^{2/3}/M_1 < 55.241$. Assume for a contradiction that
\begin{equation}\label{M_assumption}
M < M_1.
\end{equation}
We will show that this implies $M \ge M_1$. Under assumption \eqref{M_assumption}, there is a zero $\beta + it$ such that $t \ge T_0$ and for which $Z(\beta, t)$ is arbitrarily close to $M$. In particular, we may take a zero satisfying
\begin{equation}\label{Z_bounds_1}
M \le Z(\beta, t) \le M(1 + \delta),\quad \delta := \min\left\{\frac{10^{-100}}{\log T_0}, \frac{M_1 - M}{2M}\right\}.
\end{equation}
Note in particular that $Z(\beta, t) < M_1$. Next, let
\begin{equation*}
\lambda := \lambda(t) = \frac{M}{B^{2/3}{L_1}^{2/3}{L_2}^{1/3}}.
\end{equation*}
There are no zeros of $\zeta(s)$ in the rectangular region 
\begin{equation}\label{zerofree_rectangle}
1 - \lambda < \Re s \le 1,\quad t - 1 \le \Im s \le Kt + 1.
\end{equation}
This is because if a zero $\beta' + it'$ exists in that region, then $\log t' \le L_1(t')$ and $\log\log t' \le L_2(t')$, so
\begin{equation*}
1 - \beta' < \lambda(t') \le \frac{M}{B^{2/3}(\log t')^{2/3}(\log\log t')^{1/3}},
\end{equation*}
i.e., $Z(\beta', t') < M$ and $t' \ge t - 1 \ge T_0 - 1$.
However, if $t' \in [T_0 - 1, T_0)$ then by Theorem \ref{thmIntZFR},
\begin{equation*}
\begin{split}
1 - \beta' &\ge \frac{0.05035}{h(t')} - \frac{0.0349}{h^2(t')} \ge \frac{0.05035}{h(T_0)} - \frac{0.0349}{h^2(T_0)} \\
&> \frac{M_1}{B^{2/3}(\log(T_0 - 1))^{2/3}(\log\log(T_0 - 1))^{1/3}} \ge \frac{M_1}{B^{2/3}(\log t')^{2/3}(\log\log t')^{1/3}}
\end{split}
\end{equation*}
so that $Z(\beta', t') \ge M_1 > M$, a contradiction. Note that the third inequality is verified by a numerical computation.
On the other hand if $t' \in [T_0, Kt + 1]$, then by \eqref{M_defn} we also arrive at a contradiction.
Thus any zero $\beta + it$ must satisfy
\begin{equation}\label{lambda_constraint_1}
\lambda \le 1 - \beta,
\end{equation}
thereby proving claim \eqref{zerofree_rectangle}.

Next, we choose $\eta = EB^{-2/3}(L_2/L_1)^{2/3}$, as in \eqref{eta_defn}, with $E = 1.8821259$.
This choice gives, for all $t \ge T_0$, 
\begin{equation}\label{lambda_constraint_2}
\lambda \le \frac{M_1}{B^{2/3}L_1^{2/3}L_2^{1/3}} = \eta\cdot \frac{M_1}{EL_2} < \frac{\eta}{417.48},
\end{equation}
where the last inequality follows from $K = 40$ and $t \ge T_0$.
Furthermore, using $Z(\beta, t) < M_1$
we have that for all $t \ge T_0$, 
\begin{equation}\label{beta_constraint_1}
1 - \beta < \frac{M_1}{B^{2/3}(\log t)^{2/3}(\log\log t)^{1/3}} < \frac{EL_2^{2/3}}{417.48B^{2/3}L_1^{2/3}} < \frac{\eta}{2},
\end{equation}
Finally, by substituting the values of $E$, $T_0$ and $K$, we have 
\begin{equation}\label{eta_constraint}
\eta \le \frac{E}{B^{2/3}}\left(\frac{L_2(T_0)}{L_1(T_0)}\right)^{2/3} < \frac{1}{4}.
\end{equation}
Collecting \eqref{lambda_constraint_1}, \eqref{lambda_constraint_2}, \eqref{beta_constraint_1} and \eqref{eta_constraint}, we see that all of the requirements of Lemma~\ref{fordlem71} are met with $R = 416$. Substituting $b_0 = 1$, $b_1 = 1.74600190914994$, $b = 3.56453965437134$, $K = 40$ (from \eqref{eqnPoly40}) into Lemma~\ref{fordlem71} and computing numerically the values of $w(0)$ and $W'(0)$, we obtain  
\begin{equation}\label{zero_inequality}
\frac{1}{\lambda}\left[0.17949 - 0.20466\left(\frac{1 - \beta}{\lambda} - 1\right)\right] \le 1.5T_1 + T_2 + C_5(R)\frac{b}{b_0} T_3,
\end{equation}
where 
\begin{equation*}
\begin{split}
T_1 &:= \frac{1 - \beta}{\eta^2},\\
T_2 &:= \frac{1}{2\eta}\left[\frac{b}{b_0}\left(\frac{2}{3}L_2 + B\eta^{3/2}L_1 + \log A\right) + \log\zeta(1 + \eta)\right], \\
T_3 &:= \lambda \Bigg[\frac{L_1}{3} + \left[5.409 + 5.392B\left(\frac{1}{\sqrt{\eta}} - 2\right)\right]L_1 + 209.1\\
&\qquad\qquad + \frac{1}{\eta^2}\left\{\frac{\log A - \log \eta + \frac{2}{3}L_2}{1.879} + 0.213\right\}\Bigg].
\end{split}
\end{equation*}
We proceed by bounding each of the terms $T_1$, $T_2$ and $T_3$.
First, using \eqref{Z_defn} we find
\begin{equation}\label{T1_bound}
\begin{split}
T_1 &= \frac{Z(\beta, t)}{B^{2/3}(\log t)^{2/3}(\log\log t)^{1/3}}\cdot \frac{B^{4/3}}{E^2}\left(\frac{L_1}{L_2}\right)^{4/3}\\
&\le \frac{M_1 B^{2/3}}{E^2}\cdot \left(\frac{L_1}{L_2}\right)^{2/3}\cdot \left(\frac{L_1}{\log t}\right)^{2/3}\frac{1}{L_2^{2/3}(\log\log t)^{1/3}}\\
&\le \kappa_1 \frac{M_1 B^{2/3}}{E^2}\left(\frac{L_1}{L_2}\right)^{2/3},
\end{split}
\end{equation}
where, since $t \ge T_0$,
\begin{equation*}
\begin{split}
\kappa_1 &= \left(\frac{L_1(T_0)}{L_2(T_0)}\right)^{2/3}\cdot\frac{1}{(\log T_0)^{2/3}(\log \log T_0)^{1/3}}.
\end{split}
\end{equation*}
Next, using \eqref{sigma_bound_1} and \eqref{eta_constraint}, we have
\begin{equation*}
\log\zeta(1 + \eta) \le \gamma \eta - \log\eta \le \frac{2}{3} L_2 + \frac{2}{3}\log \frac{B}{L_2} - \log E + \frac{\kappa_2 E}{B^{2/3}},
\end{equation*}
where 
\begin{equation*}
\kappa_2 := \gamma\left(\frac{L_2(T_0)}{BL_1(T_0)}\right)^{2/3}.
\end{equation*}
Hence
\begin{equation}\label{T2_bound}
\begin{split}
T_2 &\le \frac{1}{2\eta}\left[\frac{b}{b_0}\left(\frac{2}{3}L_2 + B\eta^{3/2}L_1 + \log A\right) + \frac{2}{3}L_2 + \frac{2}{3}\log \frac{B}{L_2} - \log E + \frac{\kappa_2 E}{B^{2/3}}\right]\\
&= \left[\frac{1}{3E}\left(\frac{b}{b_0} + 1\right) + \frac{bE^{1/2}}{2b_0}\right]B^{2/3}L_1^{2/3}L_2^{1/3} \\
&\qquad\qquad + \frac{1}{2E}\left[\frac{b}{b_0}\log A + \frac{2}{3}\log \frac{B}{L_2} - \log E + \frac{\kappa_2 E}{B^{2/3}}\right]B^{2/3}\frac{L_1^{2/3}}{L_2^{2/3}}.
\end{split}
\end{equation}
Next, we have 
\begin{equation}\label{T3_bound}
\begin{split}
T_3 &\le \frac{M_1}{B^{2/3}L_1^{2/3} L_2^{1/3}}\Bigg[\left(\kappa_3 + \frac{5.392B^{4/3}}{E^{1/2}}\left(\frac{L_1}{L_2}\right)^{1/3} - 10.784B\right)L_1 \\
&\qquad\qquad + \frac{1}{E^2}\left(\frac{BL_1}{L_2}\right)^{4/3}\left\{\frac{\log A + \frac{4}{3}L_2 + \frac{2}{3}\log\frac{B}{L_2} - \log E}{1.879} + 0.213\right\}\Bigg]\\
&= M_1\left(\frac{B L_1}{L_2}\right)^{2/3}\Bigg[\frac{5.392}{E^{1/2}} + \frac{4}{5.637E^2} + \frac{\kappa_3 - 10.784B}{B^{4/3}}\left(\frac{L_2}{L_1}\right)^{1/3}\\
&\qquad\qquad + \frac{1}{E^2L_2}\left\{\frac{\log A + \frac{2}{3}\log\frac{B}{L_2} - \log E}{1.879} + 0.213\right\}\Bigg],
\end{split}
\end{equation}
where, since $t \ge T_0$ and $L_1(T_0) > \log(KT_0)$, we may take
\begin{equation*}
\kappa_3 = \frac{1}{3} + 5.409 + \frac{209.1}{\log K + \log T_0}.
\end{equation*}
Last, we have
\begin{equation}\label{T4_bound}
\begin{split}
\frac{1 - \beta}{\lambda} - 1 &\le (1 + \delta)\left(\frac{L_1}{\log t}\right)^{2/3}\left(\frac{L_2}{\log\log t}\right)^{1/3} - 1 \\
&< \left(1 + \frac{10^{-100}}{\log T_0}\right)\frac{L_1}{\log t} - 1
\le \frac{\kappa_4}{\log T_0},
\end{split}
\end{equation}
where, since $t \ge T_0$,
\begin{equation*}
\kappa_4 := 10^{-100}\left(1 + \frac{\log(K + T_0^{-1})}{\log T_0}\right) + \log(K + T_0^{-1}).
\end{equation*}
Combining \eqref{zero_inequality}, \eqref{T2_bound}, \eqref{T3_bound}, \eqref{T1_bound} and \eqref{T4_bound}, we conclude
\begingroup
\allowdisplaybreaks
\begin{align*}
\frac{1}{\lambda}&\left(0.17949 - \frac{0.20466\kappa_4}{\log T_0}\right) \le \frac{1}{\lambda}\left[0.17949 - 0.20466\left(\frac{1 - \beta}{\lambda} - 1\right)\right]\notag\\
&\le 1.5\frac{1 - \beta}{\eta^2} + \frac{1}{2\eta}\left[\frac{b}{b_0}\left(\frac{2}{3}L_2 + B\eta^{3/2}L_1 + \log A\right) + \log\zeta(1 + \eta)\right]\\
&\qquad + C_5(R)\frac{b}{b_0}\lambda \left[\frac{L_1}{3} + \left[5.409 + 5.392B\left(\frac{1}{\sqrt{\eta}} - 2\right)\right]L_1 + 209.1\right.\\
&\qquad\qquad \left. +  \frac{1}{\eta^2}\left\{\frac{\log A - \log \eta + \frac{2}{3}L_2}{1.879} + 0.213\right\}\right]\\
&\le \frac{1.5\kappa_1 M_1 B^{2/3}}{E^2}\left(\frac{L_1}{L_2}\right)^{2/3} + \left[\frac{1}{3E}\left(\frac{b}{b_0} + 1\right) + \frac{bE^{1/2}}{2b_0}\right](BL_1)^{2/3}L_2^{1/3}\\
&\qquad\qquad + \frac{1}{2E}\left[\frac{b}{b_0}\log A + \frac{2}{3}\log \frac{B}{L_2} - \log E + \frac{\kappa_2 E}{B^{2/3}}\right]\left(\frac{BL_1}{L_2}\right)^{2/3}\\
&\qquad\qquad + C_5(R)\frac{b}{b_0} M_1\left(\frac{B L_1}{L_2}\right)^{2/3}\left[\frac{5.392}{E^{1/2}} + \frac{4}{5.637E^2} + \frac{\kappa_3 - 10.784B}{B^{4/3}}\left(\frac{L_2}{L_1}\right)^{1/3}\right.\\
&\qquad\qquad\qquad\qquad \left. + \frac{1}{E^2 L_2}\left\{\frac{\log A + \frac{2}{3}\log\frac{B}{L_2} - \log E}{1.879} + 0.213\right\}\right].
\end{align*}
\endgroup
At this point we substitute the values $A = 76.2$, $B = 4.45$, $K = 40$, $b_0 = 1$,
$b = 3.56453965437134$, $M_1 = 0.048976$, $E = 1.8821259$ and $R = 416$, and using \eqref{eqnTheta40} and \eqref{C5_defn} to compute $C_5(R)$, we obtain
\begin{equation*}
\frac{1}{\lambda}\left(0.17949 - \frac{0.755}{\log T_0}\right) \le (BL_1)^{2/3}L_2^{1/3}\left[3.25351 + Y(t) \right]
\end{equation*}
where 
\begin{equation*}
Y(t) := \frac{4.940431}{L_2} + \frac{0.136899}{L_2^2} - \frac{1.031863}{L_1^{1/3}L_2^{2/3}} - \frac{0.177104\log L_2}{L_2} - \frac{0.0179076\log L_2}{L_2^2}.
\end{equation*}
A short \textit{Mathematica} computation is used to verify that $Y(t)$ is decreasing for $t \ge T_0$.\footnote{Our monotonicity argument is used to overcome a small issue in Ford's \cite{Ford2000} treatment (after corrections in \cite{Ford2}). In (8.9), the following inequality was used: \[\frac{0.04893}{L_2}\left(\log A + \frac{2}{3}\log \frac{B}{L_2}\right) \le  0.0048\log A + 0.0031\log \frac{B}{L_2},\quad t \ge \exp(30000).\] This should be reversed if $t$ is sufficiently large, as both sides of the inequality are negative.} Therefore, $Y(t) \le Y(T_0) < 0.4110503$.
However, from the definition of $\lambda$ together with $t \ge T_0$, we have
\begin{equation}\label{main_proof_last_line}
\begin{split}
M = \lambda B^{2/3}{L_1}^{2/3}{L_2}^{1/3} &\ge \frac{0.17949 - 0.755/\log T_0}{3.25351 + Y(t)} \\
&\ge \frac{0.17949 - 0.755/52238}{3.25351 + 0.4110503} > 0.04897601 > M_1,
\end{split}
\end{equation}
hence we have arrived at the desired contradiction.

\section{Proof of Theorem~\ref{thmIntZFR}}\label{sec:intermediate_proof}

The proof of this theorem is similar to that of Theorem~\ref{Main}. Instead of bounding $\zeta(s)$ on $\sigma = 1 \pm \eta(t)$, we use upper bounds on $\zeta(s)$ on the lines $\sigma = 1/2$ and $\sigma = 3/2$, i.e., $\eta$ is fixed at $1/2$. The main disadvantage of this new scheme is that the best known bounds of $\zeta(1/2 + it)$ are of order $t^{\theta + \epsilon}$ for some fixed $\theta > 0$, which means that the resulting zero-free region will only have width $O(1/\log t)$. Nevertheless, the resulting zero-free region has a better asymptotic constant than \eqref{eqnMTc}, so we use this result to cover the range $\exp(8928) \le t \le \exp(52238)$. 

Throughout this section, let us define 
\begin{equation}\label{J_defn}
J(t) := \frac{27}{164}\log t + \log 307.098.
\end{equation}
This is the logarithm of the sub-Weyl bound in \eqref{subweyl_bound}. First, we require the following lemma, which corresponds to Lemma~\ref{fordlem71} for this zero-free region.

\begin{lemma}\label{real_part_inequality_intermediate}
Suppose $\zeta(\beta + it) = 0$ where $t \ge \exp(1000)$ and $\beta \ge 1 - 1712^{-1}$. Let $b$, $b_0$ and $K$ be defined in \eqref{eqnPoly40}. Suppose there exists $\lambda \in (0, 1 - \beta]$ such that there are no zeros in the rectangle  
\[
1 - \lambda < \Re s \le 1,\quad t - 1 \le \Im s \le Kt + 1.
\]
Then
\begin{align*}
\frac{1}{\lambda}\left(0.17949 - 0.20466\left(\frac{1 - \beta}{\lambda} - 1\right)\right) &\le 5.746(1 - \beta) + \frac{b}{b_0}J(Kt + 1) + 0.851 \\
&+ 1.0146\lambda\frac{b}{b_0}(3.5691 L_1 + 5.316L_2 + 18.439),
\end{align*}
where $J(t)$ is defined in \eqref{J_defn}. 
\end{lemma}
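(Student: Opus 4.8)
The plan is to mimic the derivation of Lemma~\ref{fordlem71} (i.e., Ford's Lemma~7.1 argument), but now with the contour parameter $\eta$ fixed at $1/2$ rather than sent to $0$, so that the two vertical lines in Lemma~\ref{fordlem46} sit at $\sigma = 1/2$ and $\sigma = 3/2$. First I would invoke Lemma~\ref{fordlem46} with the smoothing function $f$ from \eqref{f_defn}, $s = 1 + it$, and $\eta = 1/2$; the hypothesis $\abs{F_0(z)} \le D/\abs{z}^2$ holds for $\Re z \ge 0$, $\abs{z} \ge (R+1)\lambda$ with $D = C_5(R)\lambda f(0)$, and one checks via \eqref{beta_constraint_1}-style inequalities (using $\beta \ge 1 - 1712^{-1}$ and $\lambda \le 1 - \beta$) that $(R+1)\lambda \le 1/2$ for the relevant $R$, so the lemma applies. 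Then I would form the usual nonnegative trigonometric polynomial combination: evaluate \eqref{fordlem46eqn} at the three points $s = 1 + it$, $s = 1 + i(t + \text{(multiples)})$ weighted by $b_k$, add $b_0$ times the $\eta$-free inequality, and use $P_K \ge 0$ together with $b_1 > b_0$ to keep only the contribution of the single zero $\rho = \beta + it$ on the left via the $F(s-\rho)$ and $\cot$ terms. This produces a lower bound of the shape $\frac{1}{\lambda}\bigl(\cos^2\theta - \frac{W'(0)b_1}{w(0)b_0}(\frac{1-\beta}{\lambda} - 1)\bigr)$ on the left side, exactly as in Lemma~\ref{fordlem71}, after substituting the numerical values of $\theta$, $w(0)$, $W'(0)$ from \eqref{eqnTheta40}, \eqref{w0_defn}, \eqref{Wp0_formula} to get the constants $0.17949$ and $0.20466$.

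For the right side, the three groups of terms are handled as follows. The integral over the line $\sigma = 3/2$ is controlled by Lemma~\ref{fordlem51} (with $\eta = 1/2$, giving $-2b_0\log\zeta(3/2)$, a harmless constant) together with the $\log\zeta(1+\eta) = \log\zeta(3/2)$ bound from \eqref{sigma_bound_1}. The integral over $\sigma = 1/2$ is the novel ingredient: here I apply Lemma~\ref{fordlem34} with the sub-Weyl bound \eqref{subweyl_bound}, i.e., $X = 307.098$, $Y = 27/164$, $Z = 0$ (checking $Y + Z > 0.1$), which replaces the Richert-type $\frac{2}{3}L_2 + B\eta^{3/2}L_1$ term by $J(Kt+1) = \frac{27}{164}\log(Kt+1) + \log 307.098$; since the $\log\abs{\zeta(\tfrac12 + i\cdot)}$ contributions are weighted by $b_1, \ldots, b_K$ and $\sum b_k = b$, this yields the term $\frac{b}{b_0}J(Kt+1)$. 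The zero-sum $\sum_{\abs{s-\rho}\ge\eta}\abs{s-\rho}^{-2}$ is bounded by Lemma~\ref{fordlem43} with $\eta = 1/2$, and together with the $1.8 + \frac{\log t}{3}$ term and the factor $D = C_5(R)\lambda f(0)$ (with $f(0) = \lambda w(0)$, so $D = C_5(R)\lambda^2 w(0)$, which produces the overall $\lambda \frac{b}{b_0}(\cdots)$ shape after dividing appropriately) this gives the last line $1.0146\lambda\frac{b}{b_0}(3.5691 L_1 + 5.316 L_2 + 18.439)$; the constant $1.0146$ is $C_5(R)$ evaluated at the appropriate $R$, and $5.746(1-\beta)$ absorbs the $0.087\pi^2 \frac{b_1}{b_0}(1-\beta)/\eta^2$-type secondary term with $\eta^2 = 1/4$, while $0.851$ collects the various bounded constants (including $\log\zeta(3/2)$, $1.8D$, the $N(t,\eta)$ constants from Lemma~\ref{fordlem42}, etc.).

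The main obstacle, and the place requiring the most care, is bookkeeping the constants: one must verify that $R$ (and hence $C_5(R) \le 1.0146$) can legitimately be taken large enough that $(R+1)\lambda \le 1/2$ given only $\lambda \le 1 - \beta \le 1712^{-1}$ and $t \ge \exp(1000)$, that the hypotheses $t \ge 100$, $t \ge 10000$ of Lemmas~\ref{fordlem34}, \ref{fordlem42}, \ref{fordlem43} are met (immediate since $t \ge e^{1000}$), and that the remaining $L_1, L_2$-dependence genuinely collapses to the claimed coefficients after using $\log(Kt+1) = L_1$, $\log\log(Kt+1) = L_2$ and bounding $\log t \le L_1$, $\log\log t \le L_2$. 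Everything else is a routine transcription of Ford's Lemma~7.1 computation with $\eta$ frozen at $1/2$ and the sub-Weyl bound \eqref{subweyl_bound} substituted for Richert's bound \eqref{rick}; I would state it as such and relegate the arithmetic to a short numerical check.
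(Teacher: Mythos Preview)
Your overall strategy---freeze $\eta = 1/2$ in Lemma~\ref{fordlem46}, form the $b_j$-weighted combination, and replace Richert's bound by the sub-Weyl estimate \eqref{subweyl_bound} on the half-line---is exactly what the paper does. However, two of the auxiliary lemmas you invoke are inapplicable at $\eta = 1/2$, and the proof as written would not yield the stated constants.

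The main gap is your appeal to Lemma~\ref{fordlem43} for the tail sum $\sum_{|s-\rho|\ge 1/2}|s-\rho|^{-2}$: that lemma carries the hypothesis $0<\eta\le 1/4$ and is genuinely invalid at $\eta = 1/2$ (its proof rests on Lemma~\ref{fordlem42} and the Richert bound near the $1$-line; if you plug in $\eta=1/2$ naively the leading coefficient even becomes negative). The paper instead quotes Ford's inequality (9.2), a separate bound specific to $\eta = 1/2$, giving $3.2357\log(jt)+5.316\log\log(jt)+16.134-4N(jt,\tfrac12)$; combined with the $1.8+L_1/3$ terms and $1.8b_0/b$ this is what produces $3.5691L_1+5.316L_2+18.439$. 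Second, Lemma~\ref{fordlem51} on the $\sigma=3/2$ line only yields $\log\zeta(3/2)\approx 0.960$, not $0.851$; the paper explicitly remarks that a sharper argument (from the proof of Ford's Theorem~3, exploiting special structure on the $3/2$-line) is required here. Two smaller points: Lemma~\ref{fordlem34} as stated requires $Z>0$, so invoking it with $Z=0$ is outside its hypotheses (the paper bypasses this by inserting $J(Kt+1)$ directly); and the constant $5.746$ comes from the inequality $\cot x - x^{-1}\ge -0.3334x$ applied to the $j=1$ term, giving $0.3334\pi^2 b_1/b_0$, rather than from $0.087\pi^2/\eta^2$ (which would give about $5.99$).
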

\begin{proof}
We largely follow the argument of \cite[Lemma~9.2]{Ford2000}. The main changes are to replace Ford's definition of $J(t)$ with \eqref{J_defn}, use a better trigonometric polynomial, increase the height from which the zero-free region takes effect (to improve its constants) and more carefully bound some secondary terms that arise. 

Let $R := \frac{1}{2(1 - \beta)} - 1 \ge 855$, so that $\eta \le 856\lambda$. Using the trigonometric polynomial \eqref{eqnPoly40}, and with $C_5(R)$ as defined in \eqref{C5_defn}, we have $C_5(R) \le 1.0146$. Suppose that $\abs{z} \ge (R + 1)\lambda$. By the lemma's assumptions, this implies $\abs{z} \ge 1/2$ and hence, as in Section~\ref{sec:aux_lemmas}, 
\begin{equation*}
\abs{F_0(z)} \le C_5(R)\frac{\lambda f(0)}{\abs{z}^2} \le \frac{1.0146\lambda f(0)}{\abs{z}^2},\quad \Re z \ge 0, \abs{z} \ge (R + 1)\lambda.
\end{equation*}
Therefore the conditions of Lemma~\ref{fordlem46} are satisfied with $D = 1.0146\lambda f(0)$ and $\eta = 1/2$.
Hence for $\Re s = 1$ and $\Im s \ge 1000$ we have
\begin{equation}\label{classic_ineq}
\begin{split}
&\Re \sum_{n\ge 1}\frac{\Lambda(n)f(\log n)}{n^{s}} \le -\sum_{\abs{s - \rho} \le \frac{1}{2}}\Re\left\{F(s - \rho) + f(0)\left(\pi\cot\left(\pi(s - \rho)\right) - \frac{1}{s - \rho}\right)\right\}\\
&\qquad + \frac{f(0)}{2}\int_{-\infty}^{\infty}\frac{\log \abs{\zeta(s - \frac{1}{2} + \frac{ui}{\pi})} - \log \abs{\zeta(s + \frac{1}{2} + \frac{ui}{\pi})}}{\cosh^2 u}\,\text{d}u\\
&\qquad + D\left\{1.8 + \frac{\log t}{3} + \sum_{\abs{s - \rho} \ge \frac{1}{2}}\frac{1}{\abs{s - \rho}^2}\right\}
\end{split}
\end{equation}
and 
\begin{equation*}
\sum_{n = 1}^\infty\frac{\Lambda(n)f(\log n)}{n} \le F(0) + 1.8D. 
\end{equation*}
We sum \eqref{classic_ineq} for $s = 1 + ijt$ for $0\le j \le K$, and using $\log(jt) \le L_1$ and the sub-Weyl bound \eqref{subweyl_bound} on the line $\sigma = 1/2$, along with the fact that $J(t)$ is increasing, we find
\begin{align}
0 &\le \sum_{n = 1}^\infty\frac{\Lambda(n)}{n}f(\log n)\left[\sum_{j = 0}^Kb_j\cos(jt\log n)\right] = \Re\sum_{j = 0}^{K}b_j\sum_{n = 1}^{\infty}\frac{\Lambda(n)}{n^{1 + ijt}}f(\log n)\notag\\
&\le -\Re\sum_{j = 0}^Kb_j\sum_{\abs{1 + ijt - \rho} \le \frac{1}{2}}\left\{F(1 + ijt - \rho) + f(0)\left(\pi\cot(\pi(1 + ijt - \rho)) - \frac{1}{1 + ijt - \rho}\right)\right\}\notag\\
&\qquad\qquad + bf(0)J(Kt + 1) - \frac{1}{2}\sum_{j = 1}^Kb_j\int_{-\infty}^{\infty}\frac{\log \abs{\zeta(\frac{3}{2} + ijt + \frac{ui}{\pi})}}{\cosh^2 u}\,\text{d}u + b_0 F(0)\notag\\
&\qquad\qquad + D\left\{b\left(1.8 + \frac{L_1}{3}\right) + 1.8b_0 + \sum_{j = 1}^Kb_j\sum_{\abs{1 + ijt - \rho} \ge \frac{1}{2}}\frac{1}{\abs{1 + ijt - \rho}^2}\right\}.\label{classic_bound0}
\end{align}
We now seek an upper bound for the right side of \eqref{classic_bound0}, following largely the same arguments as Ford.
First, as in the proof of \cite[Thm.~3]{Ford2000}, we have
\begin{equation*}
-\frac{1}{2}\int_{-\infty}^{\infty}\frac{1}{\cosh^2 u}\sum_{j = 1}^K b_j \log\left|\zeta\left(\frac{3}{2} + ijt + \frac{iu}{\pi}\right)\right|\,\text{d}u\le 0.851b_0.
\end{equation*}
Note that this bound relies on special properties of $\zeta(s)$ on the line $\sigma = 3/2$, and is sharper than what Lemma~\ref{fordlem51} implies.
Next, by \cite[eq.\ (9.2)]{Ford2000}, we have
\begin{equation}\label{classic_bound1}
\begin{split}
&\sum_{j = 1}^Kb_j\sum_{\abs{1 + ijt - \rho} \ge \frac{1}{2}}\frac{1}{\abs{1 + ijt - \rho}^2}\\
&\qquad\le \sum_{j = 1}^K b_j\left[3.2357\log(jt) + 5.316\log\log(jt) + 16.134 - 4N\left(jt, \frac{1}{2}\right)\right]\\
&\qquad\le b\left[3.2357L_1 + 5.316L_2 + 16.134\right] - 4\sum_{j = 1}^Kb_jN\left(jt, \frac{1}{2}\right).
\end{split}
\end{equation}
In \cite[Lemma~7.1]{Ford2000} it is established, via the maximum modulus principle, that if $\Re z \ge \pi \lambda$ and $\abs{1 + ijt - \rho} \le \frac{1}{2}$, then
\begin{equation}\label{classic_bound2}
\begin{split}
&-\Re\left\{F(1 + ijt - \rho) + f(0)\left(\pi\cot(\pi(1 + ijt - \rho)) - \frac{1}{1 + ijt - \rho}\right)\right\} \le c \pi^2 \lambda f(0),
\end{split}
\end{equation}
where 
\begin{equation*}
c := \frac{4}{\pi^2} + \frac{\pi(1 - \beta)H(R)}{w(0)(\frac{\pi}{2} - \pi(1 - \beta))^2}.
\end{equation*}
We use \eqref{classic_bound2} for $j \ne 1$. If $j = 1$, then $1 + ijt - \rho = 1 - \beta$ and $\cot x - x^{-1} \ge -0.3334x$ for $0 < x \le \pi(1 - \beta)$, so
\begin{equation}
\begin{split}\label{classic_bound4}
&-\Re\left\{F(1 +it - \rho) + f(0)\left(\pi \cot(\pi(1 + it - \rho)) - \frac{1}{1 + it - \rho}\right)\right\} \\
&\qquad\qquad\qquad\qquad \le F(1 - \beta) - 0.3334\pi^2 f(0)(1 - \beta).
\end{split}
\end{equation}
Combining \eqref{classic_bound2} and \eqref{classic_bound4}, and noting that $c\pi^2\lambda f(0)b_1 N(t, \frac{1}{2}) \ge 0$, we find
\begin{equation}\label{classic_bound3}
\begin{split}
&-\Re\sum_{j = 0}^Kb_j\sum_{\abs{1 + ijt - \rho} \le \frac{1}{2}}\left\{F(1 + ijt - \rho) + f(0)\left(\pi\cot(\pi(1 + ijt - \rho)) - \frac{1}{1 + ijt - \rho}\right)\right\}\\
&\qquad\qquad \le -b_1\left[F(1 - \beta) - 0.3334 \pi^2 f(0)(1 - \beta)\right] + c\pi^2\lambda f(0)\sum_{j = 0}^K b_j N\left(jt, \frac{1}{2}\right).
\end{split}
\end{equation}
Substituting \eqref{classic_bound1}, \eqref{classic_bound2} and \eqref{classic_bound3} into \eqref{classic_bound0}, we obtain
\begin{equation}\label{classic_bound5}
\begin{split}
0 &\le -b_1\left[F(1 - \beta) - 0.3334\pi^2 f(0)(1 - \beta)\right] + \left(c\pi^2\lambda f(0) - 4D\right)\sum_{j = 1}^Kb_j N\left(jt,\frac{1}{2}\right) \\
&\qquad\qquad + f(0)\left[bJ(Kt + 1) + 0.851b_0 \right] + b_0 F(0) \\
&\qquad\qquad + D\left\{b\left(1.8 + \frac{L_1}{3} + 3.2357L_1 + 5.316L_2 + 16.134\right) + 1.8b_0\right\}.
\end{split}
\end{equation}
However, from the definition of $D$ and combined with the estimates $H(R) \le 134.87$, $w(0) \ge 5.64531$, we have
\begin{equation}\label{classic_bound8}
c\pi^2\lambda f(0) - 4D = \left[4 + \frac{\pi(1 - \beta)H(R)}{w(0)(\beta - \frac{1}{2})^2} - 4.0584\right]\lambda f(0)\le 0,
\end{equation}
so we may drop the sum in \eqref{classic_bound5}. Furthermore, 
\begin{align}
b_0F(0) - b_1F(1 - \beta) &= b_0W(-1) - b_1W\left(\frac{1 - \beta}{\lambda} - 1\right) \notag\\
&= b_1\left(W(0) - W\left(\frac{1 - \beta}{\lambda} - 1\right)\right)- \left(b_1 W(0) - b_0 W(-1)\right) \notag\\
&= b_1\left(W(0) - W\left(\frac{1 - \beta}{\lambda} - 1\right)\right) - \frac{b_0 f(0)\cos^2\theta}{\lambda}.\label{classic_bound6}
\end{align}
Note that $W(z)$ is decreasing, since
\[
W'(z) = \frac{\text{d}}{\text{d}z}\int_0^{\infty}e^{-zu}w(u)\,\text{d}u = \int_0^{\infty}\frac{\text{d}}{\text{d}z}e^{-zu}w(u)\,\text{d}u = -\int_0^{\infty}ue^{-zu}w(u)\,\text{d}u < 0,
\]
and similarly $W'(z)$ is increasing.
Hence by the mean-value theorem, for some $0 < \xi < (1-\beta)/\lambda - 1$ we have
\begin{equation}\label{classic_bound7}
\begin{split}
W(0) - W\left(\frac{1 - \beta}{\lambda} - 1\right) &= \left(1 - \frac{1 - \beta}{\lambda}\right)W'(\xi)\\
&\le \left(1 - \frac{1 - \beta}{\lambda}\right)W'(0)\\
&\le 0.6617195\left(\frac{1 - \beta}{\lambda} - 1\right),
\end{split}
\end{equation}
where the last inequality follows from \eqref{Wp0_formula}. Substituting \eqref{classic_bound8}, \eqref{classic_bound6} and \eqref{classic_bound7} into \eqref{classic_bound5}, we obtain the desired result. 
\end{proof}

We now proceed to the proof of Theorem~\ref{thmIntZFR}.
Let $J(t)$ be as defined in \eqref{J_defn}, and assume throughout that $t \ge t_0 := \exp(1000)$.
Observe that
\begin{equation*}
J(Kt + 1) - J(t) = \frac{27}{164}\log\left(\frac{Kt + 1}{t}\right) \le 0.60732.
\end{equation*}
We will show that if $\zeta(\beta + it) = 0$ then $1 - \beta > c(t)(0.05035 - 0.0349 c(t))$, where 
\begin{equation*}
c(t) := \frac{1}{J(t) + 1.3686}.
\end{equation*}
Define $Y(\beta, t)$ implicitly via the equation 
\begin{equation}\label{Y_defn}
1 - \beta = \frac{0.05035 - 0.0349 c(t)}{J(t)  + Y(\beta, t)}.
\end{equation}
As $t \to \infty$, we have $Y(\beta, t) \to -\infty$, and $M := \max_{t \ge t_0}Y(\beta, t)$ is well-defined and finite.
If $M \le 1.3686$ then we are done, so assume that $M > 1.3686$, i.e., that there exists a zero $\rho = \beta + it$ such that $Y(\beta, t) = M > 1.3686$.
We pursue a contradiction argument as before.
By our assumption, there is a rectangular region near $\rho$ that is zero-free, specifically $\zeta(s) \ne 0$ for all $s$ satisfying
\begin{equation*}%\label{zerofreerect}
1 - \lambda < \Re s \le 1\quad\textrm{and}\quad t - 1 \le \Im s \le Kt + 1,
\end{equation*}
with 
\begin{equation*}
\lambda = \frac{0.05035 - 0.0349 c(t)}{J(Kt + 1) + M}.
\end{equation*}
Combining this with the definition of $Y(\beta, t)$, we obtain
\begin{equation*}
\frac{1 - \beta}{\lambda} - 1 = \frac{J(Kt + 1) + M}{J(t) + M} - 1 = \frac{J(Kt + 1) - J(t)}{J(t) + M} \le \frac{0.60732}{J(t) + M}.
\end{equation*}
Applying Lemma~\ref{real_part_inequality_intermediate}, we obtain
\begin{equation*}
\begin{split}
J(Kt + 1) + M &= \frac{0.05035 - 0.0349 c(t)}{\lambda} \\
&\le \frac{b_0}{b}\frac{1}{\lambda}\left(0.17949 - 0.20466\left(\frac{0.60732}{J(t) + M}\right)\right)\\
&\le 5.746\frac{b_0}{b}(1 - \beta) + J(Kt + 1) + 0.851\frac{b_0}{b} \\
&\qquad\qquad\qquad + 1.0146\lambda(3.5691 L_1 + 5.316 L_2 + 18.439),
\end{split}
\end{equation*}
and hence, substituting the values of $b_0$ and $b$, and using \eqref{Y_defn} with $Y(\beta, t) > 1.3686$, we obtain
\begin{align*}
M &\le 1.612(1 - \beta) + 0.23875 + 1.0146\lambda\left(3.5691L_1 + 5.316L_2 + 18.439\right)\\
&\le 0.23875 + c(t)(0.05035 - 0.0349 c(t))(34.384 + 3.6227\log t + 5.3958\log\log t).
\end{align*}
Now for $t \ge t_0 = \exp(1000)$, we have
\begin{equation*}
\begin{split}
&34.384 + 3.6227\log t + 5.3958\log\log t\\
&\qquad\qquad\qquad\le 3.69436\log t \le 22.4399(J(t) + 1.3686) = \frac{22.4399}{c(t)},
\end{split}
\end{equation*}
hence $M \le 0.23875 + 22.4399(0.05035 - 0.0349 c(t)) \le 1.3686$. This achieves our desired contradiction. To complete the proof, we simply note that $J(t) + 1.3686 \le \frac{27}{164}\log t + 7.096$.

\section{Proof of Theorem~\ref{Main2}}\label{sec:proofMain2}

Observe that the contradiction argument in the proof of Theorem \ref{Main} relies on the inequality
\begin{equation}\label{asymp_requirement_M}
M_1 \le \min_{t \ge T_0}\left\{\frac{\cos^2\theta - Y_1(T_0)}{\frac{1}{3E}\left(\frac{b}{b_0} + 1\right) + \frac{bE^{1/2}}{2b_0} + Y(t)}\right\}
\end{equation}
which appears in \eqref{main_proof_last_line}, where $Y_1(t), Y(t) \to 0$ as $t \to \infty$. If we choose $E$ as in \eqref{ford_E_defn}, then \eqref{asymp_requirement_M} is satisfied for any 
\begin{equation*}
M_1 < \cos^2\theta\left(\frac{4}{3}\right)^{2/3}\frac{b_0}{b}\left(1 + \frac{b_0}{b}\right)^{-1/3}
\end{equation*}
so long as we take $T_0$ sufficiently large. Also, as in \cite[\S1]{Ford2000}, by appealing to zero-density theorems, the number of zeros of $\zeta(s)$ in the rectangle $3/4 \le \Re s \le 1$, $T_0 - 1 \le \Im s \le T_0$ is $O(T_0^{1 - \delta})$ for some $\delta > 0$, so for sufficiently large $T_0$, most such rectangles are free of zeros.
Therefore, following the argument in Section~\ref{sec:mainproof}, there are no zeros in the region
\begin{equation*}
\sigma > 1 - \frac{M_1B^{-2/3}}{(\log\abs{t})^{2/3}(\log\log \abs{t})^{1/3}}, \quad \abs{t}\ge T_0
\end{equation*}
for sufficiently large $T_0$.
Using the polynomial $P_{46}(x)$ with values shown in \eqref{eqnPoly46}, we conclude by noting that
\begin{equation}\label{asymp_const_formula}
\cos^2\theta\left(\frac{4}{3}\right)^{2/3}\frac{b_0}{b}\left(1 + \frac{b_0}{b}\right)^{-1/3}B^{-2/3} \ge 0.05617776B^{-2/3} > (48.1588)^{-1}. 
\end{equation}

\begin{remark}
The value of $0.05617776$ improves on $0.05507$ appearing in \cite{Ford2000} and $0.055127$ appearing in \cite{Nielsen}.
\end{remark}

\section{Computations}\label{sec:comp}

The work of Ford \cite{Ford2000} relied in part on the selection of a trigonometric polynomial $P_K(x)$ with certain properties. Recall that 
\[
P_K(x) = \sum_{k = 0}^{K} b_k \cos(kx),
\]
and we require that each $b_k\geq0$, that $b_1>b_0$, and that $P_K(x) \geq 0$ for all real $x$.
Any such polynomial gives rise to an asymptotic zero-free region of the Riemann zeta-function having the form
\[
\sigma \geq 1 - \frac{1}{R_2(\log\abs{t})^{2/3}(\log\log\abs{t})^{1/3}}
\]
when $\abs{t}$ is sufficiently large, and a value for the constant $R_2$ can be computed using the polynomial.
Let $\theta$ be the unique solution in $(0,\pi/2)$ to
\begin{equation}\label{eqnTheta}
b_0 \sin^2\theta = b_1(1-\theta\cot\theta),
\end{equation}
and as in \eqref{eqnPoly40} and \eqref{eqnPoly46} let
\[
b = \sum_{k=1}^K b_k = P_K(0) - b_0.
\]
Using \eqref{asymp_const_formula} we may take
\begin{equation}\label{eqnAsympCons}
R_2 = \frac{1}{\cos^2\theta}\left(\frac{3}{4}\right)^{2/3}\frac{b}{b_0}\left(1 + \frac{b_0}{b}\right)^{1/3}B^{2/3}.
\end{equation}
In \cite{Ford2000} and \cite{Nielsen} respectively, the polynomials \eqref{ford_poly_coefficients} and \eqref{eqnNelsenPoly} with degrees $K=4$ and $5$ were used.
Here we employ a heuristic optimization technique to determine good polynomials $P_K(x)$ with $K > 4$ that produce an improved constant $R_2$.

As in \cite{HoffTrudgian}, we apply the technique of simulated annealing to find favorable trigonometric polynomials $P_K(x)$ having the required properties.
Suppose we have selected a degree $K$.
We must guarantee that any candidate polynomial $P_K(x)$ have the property that $P_K(x) \geq 0$ for all real $x$.
For this, instead of manipulating the coefficients $b_k$ of $P_K(x)$ directly, instead we maintain a list of coefficients $c_0=1$, $c_1$, $c_2$, \ldots, $c_K$.
We set
\[
g(x) = \sum_{k=0}^K c_k e^{i k x}
\]
and let
\[
P_K(x) = \frac{\abs{g(x)}^2}{\sum_{j=0}^K c_j^2}.
\]
Thus, $b_0=1$ and for $k>0$ the coefficient $b_k$ is the $k$th autocorrelation of the sequence of coefficients $c_k$, suitably scaled:
\[
b_k = \frac{2\sum_{j=0}^{K-k} c_j c_{j+k}}{\sum_{j=0}^K c_j^2}.
\]
With this construction, we are assured that $P_K(x) \geq 0$ for all $x$.

Given $K$ and a positive real number $H$, our procedure begins by setting $c_0 = 1$ and selecting each $c_k$ with $1 \le k\leq K$ uniformly at random from the interval $[0,H]$, and then it computes the associated coefficients $b_k$.
Then $P_K(x) \ge 0$ and each $b_k \ge 0$ by construction, and we need only check if $b_1 > b_0$.
If this does not hold we simply pick a new list of values $c_k$ and restart.
We typically find a qualifying polynomial in short order, and we compute its associated value $R_2$ using \eqref{eqnAsympCons}.
We then employ simulated annealing to search for better polynomials from this starting point.

In this process, we maintain a current maximum step size $S$ and temperature $T$.
Given these values, we perform an adjustment to our current polynomial for a certain number of iterations $N$.
In each iteration, we select a positive integer $k < K$ at random, and a random real value $s\in[-S,S]$, then add $s$ to $c_k$ and perform the $O(K)$ operations required to update the $b_k$ values.
If any $b_k < 0$, or if $b_1\leq b_0$, then we reject this adjustment and return $c_k$ to its prior value.
Otherwise, we compute the value $R_2$ for the adjusted polynomial, using Newton's method to determine $\theta$ in \eqref{eqnTheta}.
If the new value is smaller than our prior value, then we keep this adjustment and move to the next iteration.
If the new value is larger than our prior value, then we keep the adjustment with probability depending on the current temperature $T$, otherwise we reject it and return $c_k$ to its prior value.
The threshold probability is $e^{-\Delta R_2/T}$, so that the likelihood of accepting a larger value for $R_2$ is smaller when the change in $R_2$ is greater, but we are more likely to accept larger adjustments when the temperature $T$ is higher.
For a fixed value of $S$, our method executes $N$ iterations for each value of a decreasing sequence of temperature values $T$, culminating with the effective selection $T=0$, so where only improvements to $R_2$ are allowed.
We repeat this for several values of $S$, which decay exponentially.

We used this procedure to search for favorable polynomials with degree from $K = 10$ to $K = 72$.
In each case we typically selected $H\in[100,200]$, step values $S$ decreasing gradually from $50$ or $60$ and slowly decreasing to approximately $2$, using about twelve temperature values $T$, and selecting $N$ near $8000$.
We found many polynomials with $R_2 < 48.18$, and our best polynomial has degree $46$ and is recorded in Table~\ref{tableP46}.
It produces
\[
R_2 = 48.1587921551117,
\]
which we employ in the statement of Theorem~\ref{Main2}.
Figure~\ref{figP40P46}(b) shows a plot of this polynomial over the interval $[\pi/2,\pi]$.

\begin{figure}[tb]
\caption{The trigonometric polynomials employed in the proofs of Theorems~\ref{Main} and~\ref{Main2} respectively, plotted over $[\pi/2,\pi]$.}\label{figP40P46}
\begin{center}
\begin{tabular}{cc}
\includegraphics[width=2.4in]{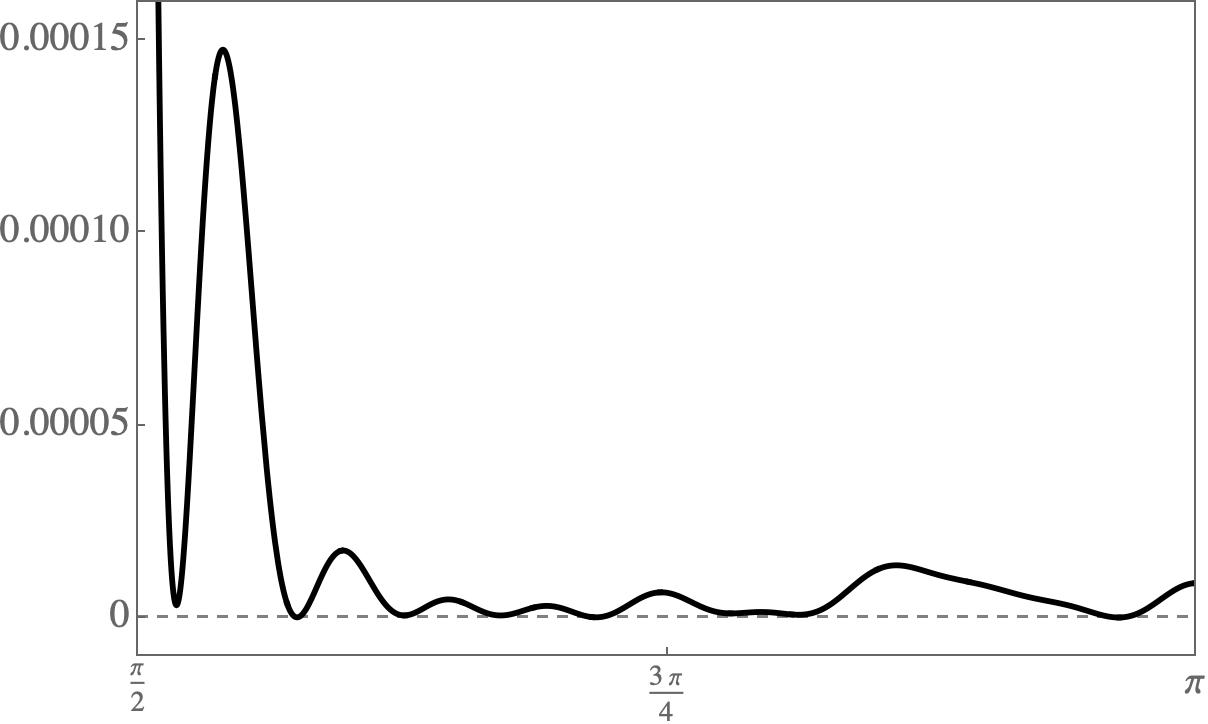} &
\includegraphics[width=2.4in]{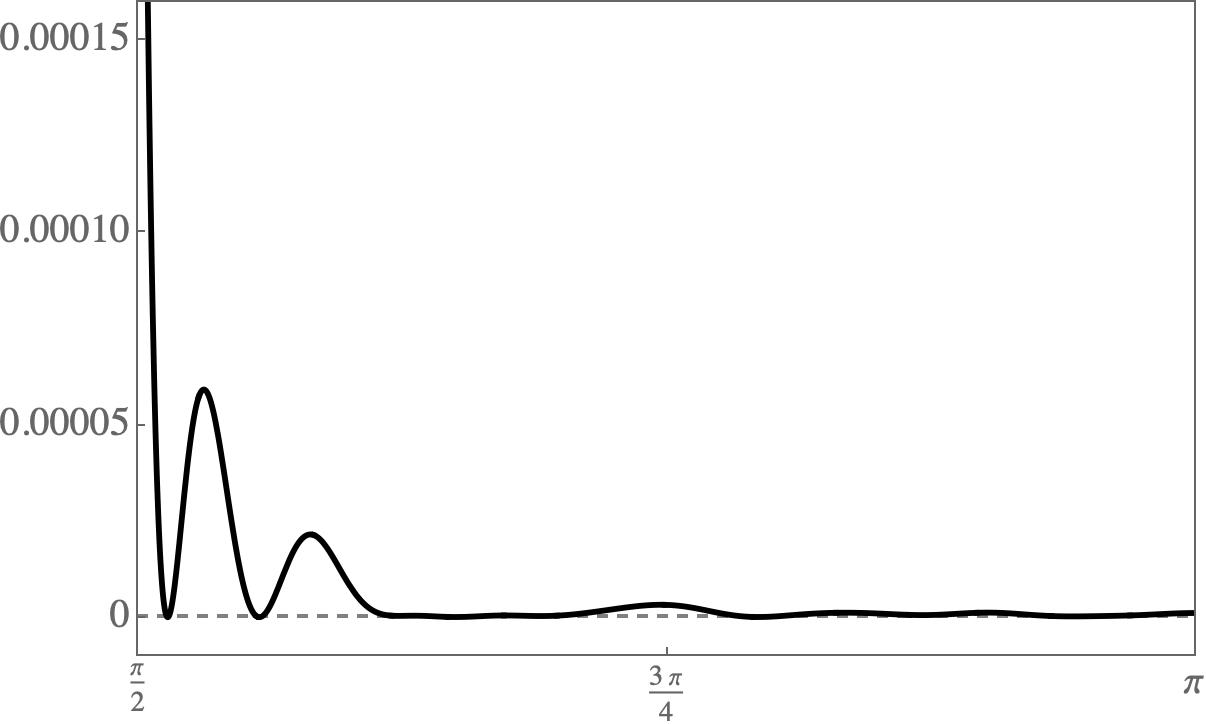}\\
(a) $P_{40}(x)$ &
(b) $P_{46}(x)$
\end{tabular}
\end{center}
\end{figure}

\begin{table}[tbh]
\caption{$P_{46}(x)=\sum_{k=0}^{46}b_k \cos(kx)=\abs{\sum_{k=0}^{46} c_k e^{i kx}}^2/\sum_{k=0}^{46} c_k^2$, with $b=\sum_{k=1}^{46} b_k$.}\label{tableP46}
\tiny
\begin{tabular}{|lll|lll|}\hline
$k$ & $c_k$ & $b_k$ & $k$ & $c_k$ & $b_k$\\\hline
\TS
0 & $1$ & $1$ &
  24 & $49282.888742825$ & $0.000127104592072581$\\
1 & $338.377844758599$ & $1.74708744081848$ &
  25 & $-72469.9665724928$ & $1.74058423843506\cdot10^{-7}$\\
2 & $-219.537480547081$ & $1.14338015090023$ &
  26 & $-80343.7855839228$ & $6.156980223188\cdot10^{-9}$\\
3 & $-736.781312848966$ & $0.521864216745001$ &
  27 & $130557.454262211$ & $7.4923012998548\cdot10^{-5}$\\
4 & $914.902037465737$ & $0.132187571762225$ &
  28 & $456655.665589724$ & $6.29610657045172\cdot10^{-5}$\\
5 & $1915.78694475716$ & $1.44250682908725\cdot10^{-7}$ &
  29 & $686366.255781866$ & $4.51492091998615\cdot10^{-7}$\\
6 & $-1310.28600595906$ & $4.69075278525482\cdot10^{-9}$ &
  30 & $690091.748824027$ & $1.76696516341167\cdot10^{-8}$\\
7 & $-3389.853917904$ & $0.0141904926848435$ &
  31 & $504386.928024044$ & $3.57616762286565\cdot10^{-5}$\\
8 & $1732.46060916218$ & $0.00859097729886965$ &
  32 & $256781.756010027$ & $2.9356535048273\cdot10^{-5}$\\
9 & $6943.01235038993$ & $5.05758761820625\cdot10^{-7}$ &
  33 & $60405.4597040306$ & $2.6547976338407\cdot10^{-7}$\\
10 & $-278.171504957099$ & $4.42284301054098\cdot10^{-10}$ &
  34 & $-37039.4291423529$ & $7.39578841754684\cdot10^{-7}$\\
11 & $-11594.9052445657$ & $0.00262452919575262$ &
  35 & $-49829.9664619879$ & $1.5703528751761\cdot10^{-5}$\\
12 & $-4279.8222109347$ & $0.0018969952017721$ &
  36 & $-22696.5925525196$ & $1.16349907747152\cdot10^{-5}$\\
13 & $14539.7736361703$ & $4.69472495111911\cdot10^{-10}$ &
  37 & $1689.57285600626$ & $1.01423339047177\cdot10^{-7}$\\
14 & $11710.3298598379$ & $2.18058618368512\cdot10^{-7}$ &
  38 & $9780.98700327532$ & $1.71248131672039\cdot10^{-6}$\\
15 & $-18824.0950949349$ & $0.000818384876659817$ &
  39 & $10336.0633101459$ & $7.84636117271159\cdot10^{-6}$\\
16 & $-33323.9900467912$ & $0.000639651965532567$ &
  40 & $9993.04428459519$ & $5.93829512034697\cdot10^{-6}$\\
17 & $-663.769351563045$ & $3.11262094946825\cdot10^{-8}$ &
  41 & $9558.78229646887$ & $9.47232309558493\cdot10^{-7}$\\
18 & $34162.7992046244$ & $7.74994211145798\cdot10^{-7}$ &
  42 & $7861.68784142526$ & $4.84440446543232\cdot10^{-8}$\\
19 & $7425.01374396162$ & $0.000329183630974004$ &
  43 & $6657.72906076572$ & $9.72548049252508\cdot10^{-7}$\\
20 & $-56820.1949038606$ & $0.000268358318561904$ &
  44 & $4736.89926522741$ & $8.45180184576162\cdot10^{-7}$\\
21 & $-60583.1989268389$ & $4.43747297378809\cdot10^{-7}$ &
  45 & $2233.04706685592$ & $2.25111200007826\cdot10^{-7}$\\
22 & $27278.3371854473$ & $1.87358718910571\cdot10^{-7}$ &
  46 & $504.683217557847$ & $6.56678999833493\cdot10^{-10}$\\
23 & $101206.908417904$ & $0.000151428354073652$ & & & $b=3.57440943022073$\\\hline
\end{tabular}
\end{table}

For the result in Theorem~\ref{Main}, we employed two similar procedures.
First, we amended the objective function to compute a value $R_1$ in the zero-free region for all $\abs{t} \ge 3$.
However, each iteration of this computation was much slower, so our computations here were limited.
Indeed, we restricted our searches to degrees $K \le 28$ in this case due to the greater computational complexities.
Second, we used the objective function for the asymptotic constant, and reset our parameters to allow the step size to decrease more rapidly while greatly increasing the number of iterations $N$ per round (taking $N$ between $10^5$ and $10^6$), as well as the number of total number searches performed per degree selection.
This allowed for a much larger search, and we tested degrees $K\leq55$ with this strategy.
Each search recorded polynomials optimized for the asymptotic constant $R_2$, then these polynomials were tested to determine their $R_1$ value for Theorem~\ref{Main}.
One degree-40 polynomial found with this procedure had an asymptotic constant of $R_2 \approx 48.162$, which is inferior to the polynomial displayed in Table~\ref{tableP46}, but it had the best value for $R_1$.
We used this polynomial as our initial state in a further annealing procedure that optimized for $R_1$ to determine an additional small improvement.
Our final polynomial is listed in Table~\ref{tableP40}.
This polynomial $P_{40}(x)$ produced the value $R_1=55.241$ used for Theorem~\ref{Main}.
Figure~\ref{figP40P46}(a) displays a plot of this polynomial over $[\pi/2,\pi]$.

\begin{table}[tbh]
\caption{$P_{40}(x)=\sum_{k=0}^{40}b_k \cos(kx)=\abs{\sum_{k=0}^{40} c_k e^{i kx}}^2/\sum_{k=0}^{40} c_k^2$, with $b=\sum_{k=1}^{40} b_k$.}\label{tableP40}
\tiny
\begin{tabular}{|lll|lll|}\hline
$k$ & $c_k$ & $b_k$ & $k$ & $c_k$ & $b_k$\\\hline
\TS
0 & $1$ & $1$ &
  21 & $14616.1664568754$ & $4.66702819061453\cdot10^{-7}$\\
1 & $8.70590487645377$ & $1.74600190914994$ &
  22 & $15112.6306248979$ & $8.88183754657211\cdot10^{-7}$\\
2 & $253.542513581082$ & $1.14055431833244$ &
  23 & $3281.48150931095$ & $6.61799442215331\cdot10^{-5}$\\
3 & $538.912985014916$ & $0.518966962914028$ &
  24 & $-9858.76392710328$ & $3.70153227317542\cdot10^{-5}$\\
4 & $1421.76588050758$ & $0.130885859164882$ &
  25 & $-11913.1717506499$ & $6.2332255794641\cdot10^{-8}$\\
5 & $3062.1230018832$ & $8.86418531143308\cdot10^{-8}$ &
  26 & $-2607.30174667086$ & $3.29243016002061\cdot10^{-5}$\\
6 & $5755.1498181548$ & $1.79787121328335\cdot10^{-6}$ &
  27 & $6649.42849986177$ & $4.89938220699415\cdot10^{-5}$\\
7 & $9653.05616924715$ & $0.0137716529944408$ &
  28 & $6689.88754688983$ & $1.50988491954013\cdot10^{-5}$\\
8 & $14967.239037407$ & $0.00825900683475376$ &
  29 & $193.678093993709$ & $1.13051732969427\cdot10^{-7}$\\
9 & $21237.398416925$ & $4.91544374578637\cdot10^{-6}$ &
  30 & $-3912.86637215382$ & $2.11823533257304\cdot10^{-5}$\\
10 & $27168.5781338032$ & $2.20263007866541\cdot10^{-6}$ &
  31 & $-2318.83016640653$ & $2.13859401551174\cdot10^{-5}$\\
11 & $31408.8257398599$ & $0.00243120523137902$ &
  32 & $911.79644433382$ & $1.55071932288034\cdot10^{-6}$\\
12 & $32409.0713030987$ & $0.00172926530269636$ &
  33 & $1499.03441911128$ & $1.51812185041036\cdot10^{-6}$\\
13 & $28642.8233658012$ & $1.35500078722447\cdot10^{-6}$ &
  34 & $159.800369623307$ & $1.67615806595912\cdot10^{-5}$\\
14 & $19217.7742754807$ & $2.20879127662495\cdot10^{-6}$ &
  35 & $-551.30680615611$ & $1.60031224178442\cdot10^{-5}$\\
15 & $6084.93971979693$ & $0.00069712400164774$ &
  36 & $-146.185445028008$ & $3.94634065729451\cdot10^{-6}$\\
16 & $-6971.7133423118$ & $0.000530583559753362$ &
  37 & $160.626530894317$ & $4.08859029078879\cdot10^{-7}$\\
17 & $-16051.2777747034$ & $6.3973072524226\cdot10^{-7}$ &
  38 & $9.7531801403406$ & $1.77819241241605\cdot10^{-6}$\\
18 & $-17900.8974008674$ & $5.37323136636712\cdot10^{-7}$ &
  39 & $-46.7104974975636$ & $5.06885733758335\cdot10^{-8}$\\
19 & $-10944.9022767045$ & $0.000234320877800568$ &
  40 & $23.9407317021713$ & $7.50406436813653\cdot10^{-9}$\\
20 & $2745.65474520683$ & $0.000177364641910045$ &
  && $b=3.56453965437134$\\\hline
\end{tabular}
\end{table}

\section{The classical zero-free region}\label{sec:Classical}

In 2005, Kadiri \cite{Kadiri} established the value $R_0 = 5.69693$ in the classical zero-free region of the Riemann zeta-function by means of a clever iterative procedure.
This method relied in part on a particular smoothing function $f(z)$ having certain properties: it was required that $f\in C^2[0,1]$, with compact support, and having a Laplace transform $F(z)$ that is non-negative on the positive real axis.
Kadiri noted that Heath-Brown's work on Linnik's theorem \cite{heath_brown_zero_1992} developed four families of such functions, and that these were well-adapted for application to the problem of the classical zero-free region.
We denote these four families of functions by $f^{(i)}_{\eta,\lambda,\theta}(t)$.
Each is defined by
\[
f^{(i)}_{\eta,\lambda,\theta}(t) = \eta h^{(i)}_{\lambda,\theta}(\eta t)
\]
for $1\leq i\leq4$, where $\lambda>0$, $\theta$, and $\eta$ are real parameters, and $h^{(i)}_{\lambda,\theta}(u)$ are certain functions.
Kadiri employed the fourth of these, where
\begin{equation}\label{eqnh4}
\begin{split}
h^{(4)}_{\lambda,\theta}(u) = \lambda\sec^{2}\theta& \bigg\{\lambda\sec^{2}\theta\left(\frac{-\theta}{\lambda\tan \theta} - \frac{u}{2}\right)\cos(\lambda u\tan\theta) - \frac{2\theta}{\tan \theta} - \lambda u\\
&\quad- \frac{\sin(2\theta + \lambda u\tan \theta)}{\sin2\theta} + 2\left(1 + \frac{\sin(\theta + \lambda u\tan\theta)}{\sin\theta}\right)\bigg\},
\end{split}
\end{equation}
and one requires $\pi/2<\theta<\pi$.
Kadiri set $\lambda=1$ and selected $\theta=1.848$ in her analysis.

In 2014, Jang and Kwon \cite{jang_note_2014} applied all four families of functions $f^{(i)}_{\eta,\lambda,\theta}(u)$ inherited from Heath-Brown's list to this problem, and optimized over $\lambda$ and $\theta$ in each case.
They found that some of Heath-Brown's other functions performed slightly better than \eqref{eqnh4}, and obtained a better value for $R_0$.
Most of their improvement was due to the use of a larger height $T_0$ for which RH had been verified: they used $T_0=3.06\cdot10^{10}$ while Kadiri employed the best value known at the time, approximately $3.3\cdot10^9$.
However, investigation of other auxiliary functions allowed them to reduce their $R_0$ value further.
They found that the four functions $h^{(i)}_{\lambda,\theta}(u)$ produced in turn $5.68372$, $5.68483$, $5.68484$, and $5.68486$ with the new $T_0$ value, so among these their best result arose from $h^{(1)}_{\lambda,\theta}$, which is defined by
\begin{equation}\label{eqnh1}
\begin{split}
h^{(1)}_{\lambda,\theta}(u) = \lambda\sec^{2}\theta& \bigg\{\lambda\sec^{2}\theta\left(\frac{\theta}{\lambda\tan \theta} - \frac{u}{2}\right)\cos(\lambda u\tan\theta) + \frac{2\theta}{\tan \theta} - \lambda u\\
&\quad+ \frac{\sin(2\theta - \lambda u\tan \theta)}{\sin2\theta} - 2\left(1 + \frac{\sin(\theta - \lambda u\tan\theta)}{\sin\theta}\right)\bigg\},
\end{split}
\end{equation}
and one requires $0<\theta<\pi/2$.
Jang and Kwon chose $\lambda=1.03669$ and $\theta=1.13537$, selecting these values in concert with their choice of a non-negative trigonometric polynomial, in order to optimize the constant with this function using Kadiri's method.
As in \cite{Kadiri}, Jang and Kwon selected a favorable non-negative trigonometric polynomial of degree $4$.
Jang and Kwon also employed a function $h^{(5)}$ from Xylouris \cite{Xylouris09}, which out-performed $h^{(1)}_{\lambda,\theta}$ just slightly, producing $R_0=5.68371$.
This was the final value established in \cite{jang_note_2014}.

Independent of \cite{jang_note_2014}, in 2015 the first two authors \cite{HoffTrudgian} determined an improved value for the constant $R_0$ in the classical region by amending Kadiri's method in different ways, and showed that $R_0=5.573412$ is permissible.
A small part of that improvement arose by employing the larger value for $T_0=3.06\cdot10^{10}$.
Most of the gain resulted from two other changes: optimizing over a particular error term, and investigating admissible non-negative trigonometric polynomials of larger degree.
A polynomial of degree $16$ was constructed there by using simulated annealing with an appropriate objective function, and the constant $R_0=5.574312$ was computed using $h^{(4)}_{1,\theta}$ as in \cite{Kadiri}, with an appropriate value of $\theta$.

We take the opportunity here to combine the ideas from \cite{jang_note_2014} and \cite{HoffTrudgian}, together with the recent work \cite{PT} establishing RH to the height $T_0=3\cdot10^{12}$, to record an improved value for the constant $R_0$ in the classical zero-free region of the zeta-function.
We employ the admissible non-negative trigonometric polynomial of degree $16$ from \cite[Table~5]{HoffTrudgian}, the auxiliary function $h^{(1)}_{\lambda,\theta}(u)$ from \eqref{eqnh1}, and the new value for $T_0$.
We set $\lambda=1$ since adjusting this value did produce any further gains of significance, and choose $\theta=1.13489$.
We follow the method detailed in \cite{HoffTrudgian}, with the following adjustments owing to the use of \eqref{eqnh1} rather than \eqref{eqnh4}:
\begin{itemize}
\item Since $\theta$ is now restricted to $(0,\pi/2)$, we set $d_1(\theta) = 2\theta\cot\theta$.
\item We now have $g_1(\theta) = h^{(1)}_{1,\theta}(0) = (\theta\tan\theta+3\theta\cot\theta-3)\sec^2\theta$.
\item We use the inequality $e^y\leq 1+y+y^2/2+y^3/3.47$, which is valid for $d_1(\theta)\leq1.89355$, so for $0\leq\theta\leq 1.13544$.
In \cite{HoffTrudgian}, the value $3.45$ was used in place of $3.47$.
\item For the iteration, we use $r=5$ for our lower bound on the constant we aim to achieve, as in \cite{HoffTrudgian}, but now set the initial upper bound to $R_0=5.573412$.
\item We verify that the error term $C(\eta)$ as defined in \cite[\S2.4]{Kadiri} satisfies $C(\eta)\leq0$ over the interval $[0,1/(5\log(3\cdot10^{12})]$, and that the function $K(w)$ from the same source with $\theta=1.13489$ is increasing on $[0,1]$, as required by the method.
\end{itemize}
We refer the reader to \cite{HoffTrudgian} for full details on the method.
Using this strategy, after seven iterations we compute the value $R_0=5.5586904517$ and establish Theorem~\ref{thmClassical}.
The successive values of $R_0$ determined after each iteration are displayed in Table~\ref{tableItersR03T}, together with the values of other parameters that arise during the calculation---we include these to mirror the data shown in \cite[Table~3]{HoffTrudgian}.

\begin{table}[tb]
\caption{Values of parameters from \cite{HoffTrudgian} in successive iterations when $T_0=3\cdot10^{12}$, using $r=5$ and $\theta=1.13489$.
All values are rounded at the last recorded decimal place.}\label{tableItersR03T}
\begin{tabular}{|cccccc|}\hline
\TS\BS
$R_0$ & $\eta_0\cdot 10^3$ & $\eta_1\cdot10^3$ & $\kappa$ & $\delta$ & $r$\\\hline
$5.5734120$ & $6.259945$ & $0.7565797$ & $0.4410554$ & $0.61994498$ & $5.5603156$\\ 
$5.5603156$ & $6.261572$ & $0.7581420$ & $0.4410415$ & $0.61994923$ & $5.5588702$\\ 
$5.5588702$ & $6.261752$ & $0.7583148$ & $0.4410400$ & $0.61994971$ & $5.5587103$\\ 
$5.5587103$ & $6.261772$ & $0.7583339$ & $0.4410399$ & $0.61994976$ & $5.5586927$\\ 
$5.5586927$ & $6.261774$ & $0.7583360$ & $0.4410398$ & $0.61994976$ & $5.5586907$\\ 
$5.5586907$ & $6.261775$ & $0.7583362$ & $0.4410398$ & $0.61994976$ & $5.5586905$\\ 
$5.5586905$ & $6.261775$ & $0.7583363$ & $0.4410398$ & $0.61994976$ & $5.5586905$\\\hline
\end{tabular}
\end{table}

We remark that with $h^{(4)}_{1,\theta}$ in place of $h^{(1)}_{1,\theta}$ we produced a slightly larger constant with the method, $R_0=5.5608403$, so swapping the smoothing function in this way allowed us to reduce the value of $R_0$ by approximately an additional $2.15\cdot10^{-3}$.
This is about twice the marginal gain that reported in \cite{jang_note_2014} for the same swap of auxiliary functions.
We did not investigate the function of Xylouris, as this had a very small marginal benefit in \cite{jang_note_2014}.

Finally, using this method, and again employing $h^{(1)}_{1,\theta}(u)$ with $\theta=1.13489$ and using the same degree $16$ trigonometric polynomial, we computed the value for $R_0$ that would be achieved by this method if RH were verified up to height $T_0$ in the future, for several values of $T_0 \leq 10^{15}$.
These values are exhibited in Table~\ref{tableClassicalVals}.

\begin{table}[tb]
\begin{center}
\caption{Allowable values for the constant $R_0$ in the classical zero-free region if RH were to be verified to height $T_0$.
All values are rounded up at the last recorded decimal place.}\label{tableClassicalVals}
\begin{tabular}{|cc|}\hline
\TS\BS
$T_0$ & $R_0$\\\hline
\TS
$10^{13}$ & $5.5559836$\\
$3\cdot10^{13}$ & $5.5536904$\\
$10^{14}$ & $5.5513505$\\
$3\cdot10^{14}$ & $5.5493579$\\
$10^{15}$ & $5.5473149$\\\hline
\end{tabular}
\end{center}
\end{table}

\section{Future work}\label{secFuture}

The approach taken in the proof of Theorem \ref{Main} may be summarized as follows. If a zero-free region $\sigma \ge 1 - \nu(t)$ can be established over a small finite region, say for $t \in [T_0 - 1, T_0)$, then under appropriate conditions the same zero-free region holds for all $t \ge T_0$. This suggests an inductive argument may be used---given a sequence of suitable functions $\nu_1(t)$, $\nu_2(t)$, \ldots, $\nu_N(t)$, we may use the zero-free region $\sigma \ge 1 - \nu_j(t)$ to show that there are no zeros in a small finite region, which then implies the next zero-free region $\sigma \ge 1 - \nu_{j + 1}(t)$, and so on. If in addition the first zero-free region can be established unconditionally, this produces a method of iteratively constructing a zero-free region as a union of small zero-free regions.

One possible choice for $\nu_j(t)$ is given by  
\begin{equation*}%\label{finite_zero_free_region}
\nu_j(t) := \frac{1}{r_j(\log t)^{\phi_j}(\log\log t)^{1 - \phi_j}},\quad t \ge T_0^{(j)},
\end{equation*}
for some $\phi_j > 2/3$, which can be easily established by choosing 
\begin{equation*}
\eta = \frac{E}{B^{2/3}}\left(\frac{L_2}{L_1}\right)^{\phi_j}
\end{equation*}
in place of \eqref{eta_defn}, then following the rest of the proof of Theorem \ref{Main}. If $r_j$ is small enough, then the resulting zero-free region will be sharper than Theorem \ref{Main} over some finite interval $t \in [T_1^{(j)}, T_2^{(j)}]$. By combining multiple such results, we create an envelope of zero-free regions whose union covers the interval $[3, T_0]$ for some large $T_0$. We then use the same argument as Theorem \ref{Main} to cover the range $[T_0, \infty)$. In particular, this allows us to take $T_0$ much larger than is otherwise possible, which reduces the size of $R_1$, the zero-free region constant. 

To attain a non-trivial result via this method, we need to take $r_j$ small enough that $\nu_j(T_0^{(j)}) > \nu_{j - 1}(T_0^{(j)} - 1)$, so there is a small region 
\begin{equation}\label{region_no_zeros}
T_0 - 1 \le t \le T_0, \qquad \nu_{j}(t) \le \sigma \le \nu_{j - 1}(t),
\end{equation}
in which zeros may exist, hence invalidating the inductive argument. Therefore, if we have tools to exclude the possibility of zeros in small, finite regions at known locations within the critical strip, immediate improvements to Theorem \ref{Main} are possible. By judiciously choosing $\phi_j$, we find that using 355 such regions suffice to improve the constant of Theorem \ref{Main} to 52.74, provided that no zeros exist in regions of the form \eqref{region_no_zeros} for each $j$. Conventional arguments, such as raw computation or zero-density estimates, are currently insufficient to completely exclude zeros in these regions due to their large height, ranging from $t \approx \exp(40000)$ to $t \approx \exp(5\cdot 10^7)$. However, if a new method was developed to exclude zeros in small finite regions, then immediate improvements to Theorem \ref{Main} are possible.

\end{document}